\renewcommand*\libertine@figurestyle{LF}
\renewcommand*\libertine@figurestyle{OsF}
\theoremstyle{plain}
    \newtheorem{theorem}{Theorem}
    \newtheorem{construction/theorem}[theorem]{Construction/Theorem}
    \newtheorem{corollary}[theorem]{Corollary}
    \newtheorem{lemma}[theorem]{Lemma}
    \newtheorem{proposition}[theorem]{Proposition}
\theoremstyle{definition}
    \newtheorem{remark}[theorem]{Remark}
    \newtheorem{example}[theorem]{Example}
    \newtheorem{definition}[theorem]{Definition}
\DeclareMathOperator{\Aut}{Aut}
\DeclareMathOperator{\trop}{trop}
\DeclareMathOperator {\coef}{coef}
\DeclareMathOperator {\ft}{ft}
\title[Tropical twisted Hurwitz numbers for elliptic curves]{Tropical twisted Hurwitz numbers for elliptic curves}
\author[M.~A.~Hahn]{Marvin Anas Hahn}
\address{M.~A.~Hahn: School of Mathematics 17, Westland Row, Trinity College Dublin, Dublin 2, Ireland}
\email{hahnma@tcd.ie}
\author[H.~Markwig]{Hannah Markwig}
\address{H.~Markwig: Universität Tübingen, Fachbereich Mathematik, Auf der Morgenstelle 10, 72076 Tübingen, Germany}
\email{hannah@math.uni-tuebingen.de}
\thanks{\emph{2010 Mathematics Subject Classification:}  14T15, 14N10, 57M12, 05C30.}
\keywords {Tropical geometry, Hurwitz numbers}
\begin{document}

\maketitle
\begin{abstract}
Hurwitz numbers enumerate branched morphisms between Riemann surfaces. For a fixed elliptic target, Hurwitz numbers are intimately related to mirror symmetry following work of Dijkgraaf. In recent work of Chapuy and Dołega a new variant of Hurwitz numbers with fixed genus $0$ target was introduced that includes maps between between non-orientiable surfaces. These numbers are called $b$-Hurwitz numbers and are polynomials in a parameter $b$ which measures the non-orientability of the involved maps. An interpretation in terms of factorisations of $b$-Hurwitz numbers for $b=1$, so-called twisted Hurwitz numbers, was found in work of Burman and Fesler. In previous work, the authors derived a tropical geometry interpretation of these numbers. In this paper, we introduce a natural generalisation of twisted Hurwitz numbers with elliptic targets within the framework of symmetric groups. We derive a tropical interpretation of these invariants, relate them to Feynman integrals and derive an expression as a matrix element of an operator in the bosonic Fock space.
\end{abstract}

\section{Introduction}
Hurwitz numbers count branched covers of Riemann surfaces with fixed numerical data. They originate from Hurwitz' orginal work in \cite{hurwitz1892algebraische} and have developed to important invariants in enumerative geometry. 
There are various equivalent definitions of Hurwitz numbers arising from different fields of mathematics. The one most important for this work is its interpretation via monodromy representations as an enumeration of factorisations in the symmetric group. As elliptic curves are the simplest cases of Calabi-Yau varieties, Hurwitz numbers of elliptic curves play a role in mirror symmetry. Dijkgraaf studied the relation between generating functions of Hurwitz numbers of an elliptic curve and Feynman intergrals \cite{Dij95}.

In recent work of Chapuy and Dołega \cite{chapuy2020non} a new class of Hurwitz numbers was introduced, called $b$-Hurwitz numbers depending on a parameter $b$. For $b=0$ one obtains classical Hurwitz numbers, while for $b=1$ these invariants specialise to an enumeration of covers between possibly non-orientable surface. Following \cite{BF21}, this enumeration gives rise to \textit{twisted Hurwitz numbers} which were proved to admit a definition in terms of counting factorisations in the symmetric group in \textit{loc. cit.} that mirrors its classical counterpart. In previous work \cite{HM22}, the authors developed a tropical geometry framework for the study of twisted Hurwitz numbers. So far, twisted Hurwitz numbers (and $b$-Hurwitz numbers) have only been studied for the enumeration of maps with a genus $0$ target. In this work, we introduce twisted Hurwitz numbers of an elliptic curve, also in terms of analogous factorisations in the symmetric group. Motivated by the fact that tropical geometry also provides a natural framework for the study of covers of an elliptic curve \cite{BBBM13,hahn2022triply}, we also study the tropical geometry of our twisted Hurwitz numbers of an elliptic curve.

\subsection{Elliptic Hurwitz numbers}
We first introduce the class of Hurwitz numbers of an elliptic curve showing up in the mirror symmetry relation involving Feynman integrals.

\begin{definition}[Hurwitz numbers of an elliptic curve]\label{def-hur}
Let $E$ be an elliptic curve (i.e.\ a Riemann surface of genus $1$), $g\ge1$ a non-negative integer, and $d>0$ a positive integer. Moreover, we fix $p_1,\dots,p_{2g-2}\in E$. Then, we consider covers of degree $d$, $f\colon S\to E$, such that
\begin{itemize}
    \item $S$ is a Riemann surface of genus $g$,
    \item the ramification profile of $p_1,\dots,p_{2g-2}$ is $(2,1\dots,1)$.
\end{itemize}
Two covers $f\colon S\to E$ and $f'\colon S'\to E$ are called equivalent if there exists a homeomorphism $h\colon S\to S'$, such that $f= f'\circ h$.\\
Then, we define the \textbf{Hurwitz number of the elliptic curve} $E$ as
\begin{equation}
    h_{d,g}=\sum_{[f]}\frac{1}{|\mathrm{Aut}(f)|},
\end{equation}
where the sum runs over all equivalence classes of covers as above.
\end{definition}
Such Hurwitz numbers are in fact topological invariants, i.e.\ they do not depend on the algebraic structure of the Riemann surfaces.
Via monodromy representations (see e.g. \cite{CM16}), Hurwitz numbers of an elliptic curve can be computed in terms of factorisations in the symmetric group.

\begin{lemma}\label{lem-monodrom}
The Hurwitz number $h_{d,g}$ equals $\frac{1}{d!}$ times the number of tuples
$$(\sigma,\tau_1,\ldots,\tau_{2g-2},\alpha)\in (\mathbb{S}_d)^{2g}$$
that satisfy the following:
\begin{enumerate}
    \item each $\tau_i$ is a transposition,
    \item the product of these permutations satisfies the following equation:$$\tau_{2g-2}\ldots\tau_{1}\sigma  =\alpha \sigma \alpha^{-1},$$
    \item the subgroup 
    $$\langle \sigma,\tau_1,\ldots,\tau_{2g-2},\alpha\rangle$$ acts transitively on the set $\{1,\ldots,d\}$.
\end{enumerate}

\end{lemma}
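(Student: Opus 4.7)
The plan is to invoke the classical Riemann existence theorem to translate the geometric count of branched covers into a count of representations of the fundamental group of the punctured elliptic curve into $\mathbb{S}_d$. The only work, beyond citing this correspondence, lies in computing the relevant fundamental group, matching the ramification data to cycle types, and handling the $1/|\Aut(f)|$ weighting.

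First I would compute $\pi_1(E\setminus\{p_1,\dots,p_{2g-2}\}, q_0)$ from the standard CW structure on a torus (one $0$-cell, two $1$-cells $a,b$, and a single $2$-cell attached via $[a,b]$): after puncturing, the $2$-cell retracts to a wedge of loops around the punctures, giving a presentation with generators $a,b,\gamma_1,\dots,\gamma_{2g-2}$ (where $\gamma_i$ is a small loop around $p_i$) subject to the single relation
\begin{equation*}
    [a,b]\,\gamma_1\cdots\gamma_{2g-2}=1.
\end{equation*}
Next, the Riemann existence theorem identifies equivalence classes of degree $d$ connected covers $f\colon S\to E$ with branch locus contained in $\{p_1,\dots,p_{2g-2}\}$ with $\mathbb{S}_d$-conjugacy classes of transitive homomorphisms $\rho$ from this fundamental group to $\mathbb{S}_d$. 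Writing $\alpha=\rho(a)$, $\sigma=\rho(b)$, $\tau_i=\rho(\gamma_i)$, the relation above becomes $\alpha\sigma\alpha^{-1}\sigma^{-1}\tau_1\cdots\tau_{2g-2}=1$, and since transpositions are involutions this rearranges to $\tau_{2g-2}\cdots\tau_1\sigma=\alpha\sigma\alpha^{-1}$, exactly the equation in (2). Transitivity of $\rho$ is the transitivity condition (3).

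Then I would match the local ramification data to cycle types: the monodromy $\rho(\gamma_i)$ has cycle type equal to the ramification profile above $p_i$, so the condition that this profile is $(2,1,\dots,1)$ is precisely that each $\tau_i$ is a transposition, giving (1).

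The final step is the passage from a count of conjugacy classes weighted by $1/|\Aut(f)|$ to $1/d!$ times a count of tuples. Under the correspondence above, $\Aut(f)$ is identified with the centraliser of the image of $\rho$ in $\mathbb{S}_d$, so by the orbit-stabiliser theorem applied to the conjugation action of $\mathbb{S}_d$ on the set of transitive homomorphisms, the $\mathbb{S}_d$-orbit of $\rho$ has size $d!/|\Aut(f)|$. Summing $1/|\Aut(f)|$ over equivalence classes therefore equals $1/d!$ times the total number of tuples $(\sigma,\tau_1,\dots,\tau_{2g-2},\alpha)$ satisfying (1)--(3). The main obstacle is being precise about this last bookkeeping, together with fixing orientations and orderings in the fundamental group presentation so that the commutator appears with the exact sign needed to recover the stated relation.
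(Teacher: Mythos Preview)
Your proposal is correct and follows precisely the approach the paper indicates: the paper does not give a full proof but only remarks that the idea is to lift loops in the fundamental group of the punctured elliptic curve (referencing a figure and \cite{CM16}), which is exactly the monodromy/Riemann existence argument you have spelled out. Your handling of the presentation of $\pi_1$, the translation of ramification profiles into cycle types, and the orbit--stabiliser bookkeeping for the $1/d!$ factor are the standard details that the paper leaves implicit.
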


The idea for the proof of Lemma \ref{lem-monodrom} is to lift loops in the fundamental group of the elliptic curve to paths in the covering surface, see Figure \ref{fig:fundamentalgroup}.

\begin{figure}
    \centering
    \input{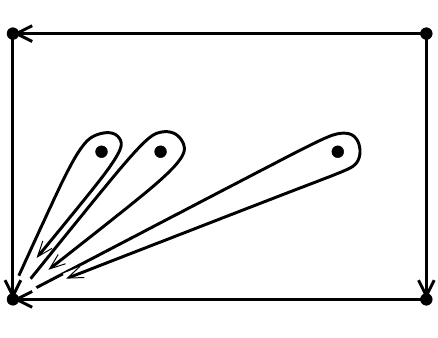_t}
    \caption{A sketch of a cut open elliptic curve and the paths in its fundamental group which create the tuples to be counted to obtain a Hurwitz number via monodromy representations.}
    \label{fig:fundamentalgroup}
\end{figure}

\subsection{Twisted elliptic Hurwitz numbers}

We fix the involution $$\tau= (1\;\; d+1) (2 \;\;d+2) \ldots (d \;\;2d)\in \mathbb{S}_{2d}$$ and use the notation

\begin{equation}
    B_d=C(\tau)=\{\sigma \in \mathbb{S}_{2d}\;|\; \sigma \tau \sigma^{-1}=\tau\},\quad C^\sim(\tau)=\{\sigma \in \mathbb{S}_{2d}\;|\; \tau \sigma \tau^{-1} = \tau \sigma \tau =  \sigma^{-1}\}.
\end{equation}

We further define the subset $B^\sim_d \subset C^\sim(\tau)$ consisting of those permutations that have no self-symmetric cycles (see \cite[Lemma 2.1]{BF21}). %We then set, for a partition $\lambda$ of $d$,  $B^\sim_\lambda\subset B^\sim_d$ to consist of those permutations that have $2|\lambda|$ cycles, two of length $\lambda_i$ for each i, that pair up under conjugation with $\tau$. 
We define the \textbf{twisted Hurwitz numbers of an elliptic curve} in terms of the symmetric group.

\begin{definition}[Twisted Hurwitz numbers of an elliptic curve]
\label{def:twisthur}
Fix a genus $g$ and a degree $d$.
Then the twisted Hurwitz number $\tilde{h}_{d,g}$ of degree $d$ and genus $g$ of an elliptic curve is defined to be
$\frac{1}{(2d)!!}$ times the number of tuples 

$$(\sigma,\eta_1\ldots,\eta_{g-1},\alpha)\in \mathbb({S}_{2d})^{g+1}$$
that satisfy the following conditions:

\begin{enumerate}
    \item each $\eta_s$ is a transposition, $\eta_s=(i_s\;\;j_s)$ such that $j_s\neq \tau(i_s)$,
    \item $\sigma \in B^\sim_d$,
    \item $\alpha \in B_d$,
    \item the product of these permutations satisfies the following equation:
    $$\eta_1\ldots\eta_{g-1}\sigma (\tau\eta_{g-1}\tau)\ldots (\tau\eta_1\tau) =\alpha \sigma \alpha^{-1},$$
    \item the subgroup 
    $$\langle \sigma,\eta_1,\ldots,\eta_{g-1}, (\tau\eta_{g-1}\tau),\ldots ,(\tau\eta_1\tau),\alpha\rangle$$ acts transitively on the set $\{1,\ldots,2d\}$.
\end{enumerate}

%\begin{align} 
%\tilde{h}_{d,g} =  \frac{1}{(2d)!}\cdot \sharp \Big\{ (&\sigma,\eta_1\ldots,\eta_{g-1},\alpha)\;|\; \eta_s=(i_s\;\;j_s),\, j_s\neq \tau(i_s),\, \sigma \in B^\sim_d,\, \alpha \in B_d , \\&
%\eta_1\ldots\eta_{g-1}\sigma (\tau\eta_{g-1}\tau)\ldots (\tau\eta_1\tau) =\alpha \sigma \alpha^{-1},
%\\& \mbox{the subgroup }\langle \sigma,\eta_1,\ldots,\eta_{g-1}, (\tau\eta_{g-1}\tau),\ldots ,(\tau\eta_1\tau),\alpha\rangle \mbox{ acts transitively on }\{1,\ldots,2d\}\Big\}.
%\end{align}

\end{definition}

The motivation to call these counts of factorisations in the symmetric group twisted Hurwitz numbers of an elliptic curve is coming from Lemma \ref{lem-monodrom}.
We can also drop the transitivity condition. On the tropical side, this corresponds to allowing disconnected source curves for our covers. We denote these numbers by $\tilde{h}_{d,g}^\bullet$.

\begin{example}\label{ex-twistednumber}
  The twisted Hurwitz number $\tilde{h}_{2,3}$ equals $16$.
\end{example}

\begin{remark}
    It is tempting to expect that $\tilde{h}_{d,g}$ admits a geometric interpretation similar to twisted Hurwitz numbers with genus $0$ target (see \cite[Section 2.2]{chapuy2020non} and the discussion after Remark 6 in \cite{HM22}). A reasonable expectation could be that elliptic twisted Hurwitz numbers count maps to an elliptic target $E$ with an orientation reversing involution that respects an orientation reversing involution on $E$. We leave the question of such a geometric interpretation as an open problem.
\end{remark}

\subsection{Main Results.}
In \cref{sec-troptwist}, we develop a tropical approach to elliptic twisted Hurwitz numbers. We introduce tropical elliptic twisted Hurwitz numbers as enumerations of tropical coverings of a tropical elliptic curve, i.e. as certain maps between graphs. As the main result of this section, in \cref{thm-corres} we prove a correspondence theorem stating that elliptic twisted Hurwitz numbers and their tropical counterparts coincide. This tropical interpretation then allows us to derive an expression in \cref{thm-feynman} of elliptic twisted Hurwitz numbers in terms of Feynman diagrams in \cref{sec-feynman}. We note that the computation of elliptic Hurwitz numbers in terms of Feynman diagrams was first derived in \cite{Dij95} and a new proof employing tropical techniques was given in \cite{BBBM13}. Finally, we follow the slogan \textit{bosonification is tropicalisation} which has now been established in a plethora of works \cite{block2016refined,CJMR16,cavalieri2021counting,hahn2022tropical,hahn2020wall} to express elliptic twisted Hurwitz numbers as matrix element on the bosonic Fock space in \cref{sec-fockspace}.

\subsection*{Acknowledgements} The authors thanks Raphaël Fesler for many useful discussions. The second author acknowledges support by the Deutsche Forschungsgemeinschaft (DFG, German Research Foundation), Project-ID 286237555, TRR 195. Computations have been made using the Computer Algebra System \textsc{gap} and \textsc{OSCAR} \cite{GAP4, Oscar}. We thank an anonymous referee for useful comments.

\section{Twisted tropical covers of an elliptic curve}
\label{sec-troptwist}

In \cite{HM22}, we have introduced twisted versions of tropical Hurwitz numbers. We now generalize to consider twisted versions of tropical covers of an elliptic curve and their counts, building on \cite{CJM10, BBM10, BBBM13}.
We start by recalling the basic notions of tropical curves and covers. Then we introduce twisted tropical covers of an elliptic curve, which can roughly be viewed as tropical covers with an involution. By fixing branch points, we produce a finite count of twisted tropical covers of an elliptic curve for which we show in the following that it coincides with the corresponding twisted Hurwitz number of an elliptic curve. Readers with a background in the theory of tropical curves are pointed to the fact that we only consider explicit tropical curves in the following, i.e.\ there is no genus hidden at vertices.

\begin{definition}[Abstract tropical curves]
An abstract tropical curve is a graph $\Gamma$ with the following data:
\begin{enumerate}
    \item The vertex set of $\Gamma$ is denoted by $V(\Gamma)$ and the edge set of $\Gamma$ is denoted by $E(\Gamma)$.
    \item The $1$-valent vertices of $\Gamma$ are called \textit{leaves} and the edges adjacent to leaves are called \textit{ends}.
    \item The set of edges $E(\Gamma)$ is partitioned into the set of ends $E^\infty(\Gamma)$ and the set of \textit{internal edges} $E^0(\Gamma)$.
    \item There is a length function
    \begin{equation}
        \ell\colon E(\Gamma)\to\mathbb{R}\cup\{\infty\},
    \end{equation}
    such that $\ell^{-1}(\infty)=E^\infty(\Gamma)$.
\end{enumerate}
The genus of an abstract tropical curve $\Gamma$ is defined as the first Betti number of the underlying graph. An isomorphism of abstract tropical curves is an isomorphism of the underlying graphs that respects the length function. The combinatorial type of an abstract tropical curve is the underlying graph without the length function.
\end{definition}

\begin{definition}
    A tropical elliptic curve $E$ is a circle of a given length. It may have several two-valent vertices. In the following we will refer to any tropical elliptic curve as $E$ and do not specify the number of vertices when it is clear from the context.
\end{definition}

\begin{example}\label{ex-abstractcurves}
    Figure \ref{fig:exabstractcurves} shows two abstract tropical curves of genus $3$. We have not specified edge lengths in the picture.
    \begin{figure}
        \centering

\tikzset{every picture/.style={line width=0.75pt}} %set default line width to 0.75pt        

\begin{tikzpicture}[x=0.75pt,y=0.75pt,yscale=-1,xscale=1]
%uncomment if require: \path (0,784); %set diagram left start at 0, and has height of 784

%Shape: Ellipse [id:dp9457344938389313] 
\draw   (100.2,147.47) .. controls (100.2,136.42) and (115.87,127.47) .. (135.2,127.47) .. controls (154.53,127.47) and (170.2,136.42) .. (170.2,147.47) .. controls (170.2,158.51) and (154.53,167.47) .. (135.2,167.47) .. controls (115.87,167.47) and (100.2,158.51) .. (100.2,147.47) -- cycle ;
%Shape: Ellipse [id:dp7641122280612688] 
\draw   (191,149.67) .. controls (191,138.62) and (206.67,129.67) .. (226,129.67) .. controls (245.33,129.67) and (261,138.62) .. (261,149.67) .. controls (261,160.71) and (245.33,169.67) .. (226,169.67) .. controls (206.67,169.67) and (191,160.71) .. (191,149.67) -- cycle ;
%Straight Lines [id:da10979185938773561] 
\draw    (211.07,131.33) -- (211,167.67) ;
%Straight Lines [id:da6607691187566037] 
\draw    (241.47,131.67) -- (241.4,168) ;
%Curve Lines [id:da0941042426118176] 
\draw    (133.67,167.4) .. controls (118.33,156.73) and (115.07,139.33) .. (135.2,127.47) ;
%Curve Lines [id:da013419675839305367] 
\draw    (133.67,167.4) .. controls (149.67,159) and (151.73,141.87) .. (135.2,127.47) ;

\end{tikzpicture}

         \caption{Two abstract tropical curves of genus $3$. Edge lengths are not specified in the picture.}
        \label{fig:exabstractcurves}
    \end{figure}
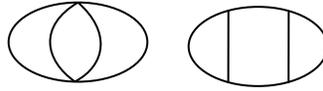
\end{example}

Next, we define the notion of tropical covers. We restrict to the case where the target is either a tropical elliptic curve $E$ or a subdivided version of $\mathbb{R}$, i.e.\ a line with some $2$-valent vertices.

\begin{definition}[Tropical covers]
Let the target $\Gamma_2$ be either a tropical elliptic curve $E$ or a subdivided version of $\mathbb{R}$.
A tropical cover between abstract tropical curves $\pi\colon \Gamma_1\to\Gamma_2$ is a surjective harmonic map, i.e.:
\begin{enumerate}
    \item We have $\pi(V(\Gamma_1))\subset V(\Gamma_2)$.
    \item Let $e\in E(\Gamma_1)$. Then, we interpret $e$ and $\pi(e)$ as intervals $[0,\ell(e)]$ and $[0,\ell(\pi(e))]$ respectively. We require $\pi$ restricted to $e$ to be a bijective integer linear function $[0,\ell(e)]\to[0,\ell(\pi(e))]$ given by $t\mapsto \omega(e)\cdot t$, with $\omega(e) \in \mathbb{Z}$. If $\pi(e)\in V(\Gamma_2)$, we define $\omega(e)=0$. We call $\omega(e)$ the \textit{weight} of $e$.
    \item For a vertex $v\in V(\Gamma_1)$, we denote by $\mathrm{Inc}(v)$ the set of incoming edges at $v$ (edges adjacent to $v$ mapping to the left of $\pi(v)$) and by $\mathrm{Out}(v)$ the set of outgoing edges at $v$ (edges adjacent to $v$ mapping to the right of $\pi(v)$). We then require
    \begin{equation}
            \sum_{e\in\mathrm{Inc}(v)}\omega(e)=\sum_{e\in\mathrm{Out}(v)}\omega(e).
    \end{equation}
    This number is called the local degree of $\pi$ at $v$.
    We call this equality the \textit{harmonicity} or \textit{balancing condition}.
    For a point $v$ in the interior of an edge $e$ of $\Gamma_1$, the local degree of $\pi$ at $v$ is defined to be the weight $\omega(e)$.
\end{enumerate}
Moreover, we define the \textit{degree} of $\pi$ as the half of the sum of local degrees at all vertices and internal points of $\Gamma_1$ in the preimage of a given vertex of $\Gamma_2$. By the harmonicity condition, the degree is independent of the choice of vertex of $\Gamma_2$.\\
For any end $e$ of $\Gamma_2$, we define a partition $\mu_e$ as the partition of weights of ends of $\Gamma_1$ mapping to $e$. We call $\mu_e$ the \textit{ramification profile} of $e$.\\
We call two tropical covers $\pi_1\colon\Gamma_1\to\Gamma_2$ and $\pi_2\colon\Gamma_1'\to\Gamma_2$ equivalent if there exists an isomorphism $g\colon\Gamma_1\to\Gamma_1'$ of metric graphs, such that $\pi_2\circ g=\pi_1$.
\end{definition}

We are now ready to give a definition of twisted tropical covers, which may be viewed as tropical covers admitting an involution with specified locus.

\begin{definition}[Twisted tropical covers of $E$]
We define a twisted topical cover of $E$ to be a tropical cover $\pi\colon\Gamma_1\to E$ with an involution $\iota\colon\Gamma_1\to\Gamma_1$ which respects the cover $\pi$, such that:
\begin{itemize}
    \item we have $g-1$ branch points $p_1,\dots,p_{g-1}\in E$ which we set as vertices,
    \item in the preimage of each branch point $p_i$, there are either two $3$-valent vertices or one $4$-valent vertex,
    \item the edges adjacent to a $4$-valent vertex all have the same weight,
    \item the fixed locus of $\iota$ is exactly the set of $4$-valent vertices.
\end{itemize}
\end{definition}

\begin{example}\label{ex-twotwistedcovers}
    Using the two source curves from Example \ref{ex-abstractcurves} (see Figure \ref{fig:exabstractcurves}), we can build twisted tropical covers of degree $2$, see Figure \ref{fig:twotwistedcovers}.

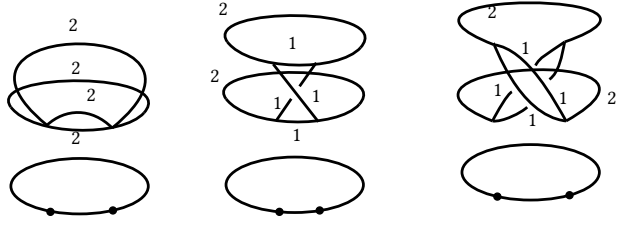
\begin{figure}
    \centering

\scalebox{0.7}{

\tikzset{every picture/.style={line width=0.75pt}} %set default line width to 0.75pt        

\begin{tikzpicture}[x=0.75pt,y=0.75pt,yscale=-1,xscale=1]
%uncomment if require: \path (0,204); %set diagram left start at 0, and has height of 204

%Shape: Ellipse [id:dp8441729110117908] 
\draw  [line width=1.5]  (100,146) .. controls (100,134.95) and (122.16,126) .. (149.5,126) .. controls (176.84,126) and (199,134.95) .. (199,146) .. controls (199,157.05) and (176.84,166) .. (149.5,166) .. controls (122.16,166) and (100,157.05) .. (100,146) -- cycle ;
%Curve Lines [id:da37576215311446415] 
\draw [line width=1.5]    (100,89) .. controls (116,110) and (180,111) .. (196,93) ;
%Curve Lines [id:da9628620687096412] 
\draw [line width=1.5]    (100,89) .. controls (81,63) and (222,63) .. (196,93) ;
%Curve Lines [id:da2730099575830376] 
\draw [line width=1.5]    (126,102) .. controls (139,90) and (158,89) .. (173,104) ;
%Curve Lines [id:da9249143795445283] 
\draw [line width=1.5]    (126,102) .. controls (29,23) and (271,24) .. (173,104) ;
%Shape: Ellipse [id:dp4429889577343582] 
\draw  [line width=1.5]  (255,145) .. controls (255,133.95) and (277.16,125) .. (304.5,125) .. controls (331.84,125) and (354,133.95) .. (354,145) .. controls (354,156.05) and (331.84,165) .. (304.5,165) .. controls (277.16,165) and (255,156.05) .. (255,145) -- cycle ;
%Curve Lines [id:da5085685583675141] 
\draw [line width=1.5]    (255,83) .. controls (271,104) and (335,105) .. (351,87) ;
%Curve Lines [id:da5875537866952572] 
\draw [line width=1.5]    (255,83) .. controls (236,57) and (377,57) .. (351,87) ;
%Shape: Ellipse [id:dp7021426582471497] 
\draw  [line width=1.5]  (425,136) .. controls (425,124.95) and (447.16,116) .. (474.5,116) .. controls (501.84,116) and (524,124.95) .. (524,136) .. controls (524,147.05) and (501.84,156) .. (474.5,156) .. controls (447.16,156) and (425,147.05) .. (425,136) -- cycle ;
%Curve Lines [id:da8549309363305362] 
\draw [line width=1.5]    (447,99) .. controls (344,58) and (601,42) .. (500,99) ;
%Curve Lines [id:da9967031146648451] 
\draw [line width=1.5]    (448,44) .. controls (343,3) and (605,5) .. (499,42) ;
%Curve Lines [id:da2791301174350569] 
\draw [line width=1.5]    (256,42) .. controls (272,63) and (336,64) .. (352,46) ;
%Curve Lines [id:da3201513397406861] 
\draw [line width=1.5]    (256,42) .. controls (237,16) and (378,16) .. (352,46) ;
%Straight Lines [id:da3280830343909167] 
\draw [line width=1.5]    (289,58) -- (321.5,99.5) ;
%Straight Lines [id:da7883149303449639] 
\draw [line width=1.5]    (302,83) -- (291,99) ;
%Curve Lines [id:da6910418560976381] 
\draw [line width=1.5]    (448,44) .. controls (472,47) and (492,85) .. (500,99) ;
%Curve Lines [id:da02827073757444376] 
\draw [line width=1.5]    (448,44) .. controls (448,41) and (465,95) .. (500,99) ;
%Straight Lines [id:da7161687024147898] 
\draw [line width=1.5]    (320,57) -- (308,74) ;
%Curve Lines [id:da9318955872163219] 
\draw [line width=1.5]    (447,99) .. controls (450,93) and (458,79) .. (461,78) ;
%Curve Lines [id:da36761289582227596] 
\draw [line width=1.5]    (478,59) .. controls (481,53) and (486,51) .. (499,42) ;
%Curve Lines [id:da0018666673878049433] 
\draw [line width=1.5]    (447,99) .. controls (454,100) and (468,93) .. (472,89) ;
%Curve Lines [id:da757560319861766] 
\draw [line width=1.5]    (488,69) .. controls (495,64) and (497,52) .. (499,42) ;
%Shape: Circle [id:dp5464353093507408] 
\draw  [fill={rgb, 255:red, 0; green, 0; blue, 0 }  ,fill opacity=1 ] (126,164.5) .. controls (126,163.12) and (127.12,162) .. (128.5,162) .. controls (129.88,162) and (131,163.12) .. (131,164.5) .. controls (131,165.88) and (129.88,167) .. (128.5,167) .. controls (127.12,167) and (126,165.88) .. (126,164.5) -- cycle ;
%Shape: Circle [id:dp8100848467753468] 
\draw  [fill={rgb, 255:red, 0; green, 0; blue, 0 }  ,fill opacity=1 ] (171,163.5) .. controls (171,162.12) and (172.12,161) .. (173.5,161) .. controls (174.88,161) and (176,162.12) .. (176,163.5) .. controls (176,164.88) and (174.88,166) .. (173.5,166) .. controls (172.12,166) and (171,164.88) .. (171,163.5) -- cycle ;
%Shape: Circle [id:dp1995093056062769] 
\draw  [fill={rgb, 255:red, 0; green, 0; blue, 0 }  ,fill opacity=1 ] (291,164.5) .. controls (291,163.12) and (292.12,162) .. (293.5,162) .. controls (294.88,162) and (296,163.12) .. (296,164.5) .. controls (296,165.88) and (294.88,167) .. (293.5,167) .. controls (292.12,167) and (291,165.88) .. (291,164.5) -- cycle ;
%Shape: Circle [id:dp2986660338971705] 
\draw  [fill={rgb, 255:red, 0; green, 0; blue, 0 }  ,fill opacity=1 ] (320,163.5) .. controls (320,162.12) and (321.12,161) .. (322.5,161) .. controls (323.88,161) and (325,162.12) .. (325,163.5) .. controls (325,164.88) and (323.88,166) .. (322.5,166) .. controls (321.12,166) and (320,164.88) .. (320,163.5) -- cycle ;
%Shape: Circle [id:dp8089179899693264] 
\draw  [fill={rgb, 255:red, 0; green, 0; blue, 0 }  ,fill opacity=1 ] (448,153.5) .. controls (448,152.12) and (449.12,151) .. (450.5,151) .. controls (451.88,151) and (453,152.12) .. (453,153.5) .. controls (453,154.88) and (451.88,156) .. (450.5,156) .. controls (449.12,156) and (448,154.88) .. (448,153.5) -- cycle ;
%Shape: Circle [id:dp5206541434871863] 
\draw  [fill={rgb, 255:red, 0; green, 0; blue, 0 }  ,fill opacity=1 ] (500,152.5) .. controls (500,151.12) and (501.12,150) .. (502.5,150) .. controls (503.88,150) and (505,151.12) .. (505,152.5) .. controls (505,153.88) and (503.88,155) .. (502.5,155) .. controls (501.12,155) and (500,153.88) .. (500,152.5) -- cycle ;

% Text Node
\draw (140,23.4) node [anchor=north west][inner sep=0.75pt]    {$2$};
% Text Node
\draw (142,54.4) node [anchor=north west][inner sep=0.75pt]    {$2$};
% Text Node
\draw (142,105.4) node [anchor=north west][inner sep=0.75pt]    {$2$};
% Text Node
\draw (153,74.4) node [anchor=north west][inner sep=0.75pt]    {$2$};
% Text Node
\draw (248,12.4) node [anchor=north west][inner sep=0.75pt]    {$2$};
% Text Node
\draw (242,60.4) node [anchor=north west][inner sep=0.75pt]    {$2$};
% Text Node
\draw (442,14.4) node [anchor=north west][inner sep=0.75pt]    {$2$};
% Text Node
\draw (301.13,102.78) node [anchor=north west][inner sep=0.75pt]    {$1$};
% Text Node
\draw (315.13,74.78) node [anchor=north west][inner sep=0.75pt]    {$1$};
% Text Node
\draw (287.13,79.78) node [anchor=north west][inner sep=0.75pt]    {$1$};
% Text Node
\draw (528,76.4) node [anchor=north west][inner sep=0.75pt]    {$2$};
% Text Node
\draw (471,92.4) node [anchor=north west][inner sep=0.75pt]    {$1$};
% Text Node
\draw (466,40.4) node [anchor=north west][inner sep=0.75pt]    {$1$};
% Text Node
\draw (446,70.4) node [anchor=north west][inner sep=0.75pt]    {$1$};
% Text Node
\draw (493,76.4) node [anchor=north west][inner sep=0.75pt]    {$1$};
% Text Node
\draw (298.13,36.78) node [anchor=north west][inner sep=0.75pt]    {$1$};

\end{tikzpicture}}

    \caption{{\color{blue}Three } twisted tropical covers of $E$ of degree $2$ and genus $3$. The labels on the edges denote the edge weight. Edges which are not labeled have weight one. The involution $\iota$ is supposed to exchange respective edges on top resp.\ bottom of the picture.}
    \label{fig:twotwistedcovers}
\end{figure}
    
\end{example}

As usually for Hurwitz numbers, our enumeration of twisted tropical covers will take automorphisms into account. We give the following definition which specifies automorphisms that take the involution into account.

\begin{definition}[Automorphisms]
    Let $\pi:\Gamma\rightarrow E$ be a twisted tropical cover with involution $\iota:\Gamma\rightarrow \Gamma$. An automorphism of $\pi$ is a morphism of abstract tropical curves (i.e.\ a map of metric graphs) $f:\Gamma\rightarrow\Gamma$ respecting the cover and the involution, i.e.\ $\pi\circ f = \pi$ and $f\circ \iota = \iota \circ f$. We denote the group of automorphisms of $\pi$ by $\Aut(\pi)$.
\end{definition}

\begin{example}\label{ex-automorphisms}
    The twisted tropical cover depicted in Figure \ref{fig:twotwistedcovers} on the left has an automorphism group of size 4:  we can independently exchange the two pairs of weight $2$ edges mapping to the same segment of $E$. The twisted tropical cover  in the middle has an automorphism group of size $2$ (generated by the involution): we can exchange the two edges of weight $2$, with the two pairs of edges of weight $1$ following along. The one on the right has an automorphism group of size $4$: we can exchange two parallel edges of weight $1$ in addition to the involution.
\end{example}

\begin{definition}[Quotient graph $\Gamma/\iota$, see \cite{HM22}] \label{def-quotient}
Let $\pi\colon \Gamma\to E$ be a twisted tropical cover with involution $\iota\colon\Gamma\to\Gamma$. The involution $\iota$ induces a symmetric relation on the vertex and edge sets of $\Gamma$: We define for $v,v'\in V(\Gamma)$  (resp. $e,e'\in E(\Gamma)$) that $v\sim v'$ (resp. $e\sim e'$) if and only $\iota(v)=v'$ (resp. $\iota(e)=e'$). We define $\Gamma/\iota$ as the graph with vertex set $V(\Gamma)/\sim$ and edge set $E(\Gamma)/\sim$ with natural identifications. For $e=[e',e'']\in E(\Gamma/\iota)$ we define the length $\ell(e)$ as $\ell(e')=\ell(e'')$ and its weight $\omega(e)$ with respect to $\pi$ to be the weight $\omega(e')=\omega(e'')$. In this way, we obtain a tropical cover from the quotient graph $\Gamma/\iota$ to $E$, which has $2$-valent vertices coming from the $4$-valent vertices of $\Gamma$, and $3$-valent vertices else.
\end{definition}

\begin{example}\label{ex-quotients}
    The quotient covers of the three twisted covers from Figure \ref{fig:twotwistedcovers} are depicted in Figure \ref{fig:quotientcover}. The middle and right tropical cover have the same quotient graph.

    \begin{figure}
        \centering

\tikzset{every picture/.style={line width=0.75pt}} %set default line width to 0.75pt        

\begin{tikzpicture}[x=0.75pt,y=0.75pt,yscale=-1,xscale=1]
%uncomment if require: \path (0,784); %set diagram left start at 0, and has height of 784

%Shape: Ellipse [id:dp9457344938389313] 
\draw   (117.87,270.72) .. controls (117.87,262.57) and (133.54,255.97) .. (152.87,255.97) .. controls (172.2,255.97) and (187.87,262.57) .. (187.87,270.72) .. controls (187.87,278.86) and (172.2,285.47) .. (152.87,285.47) .. controls (133.54,285.47) and (117.87,278.86) .. (117.87,270.72) -- cycle ;
%Shape: Ellipse [id:dp0232436943448473] 
\draw   (216.53,271.72) .. controls (216.53,263.57) and (232.2,256.97) .. (251.53,256.97) .. controls (270.86,256.97) and (286.53,263.57) .. (286.53,271.72) .. controls (286.53,279.86) and (270.86,286.47) .. (251.53,286.47) .. controls (232.2,286.47) and (216.53,279.86) .. (216.53,271.72) -- cycle ;
%Shape: Circle [id:dp28471858505701997] 
\draw  [fill={rgb, 255:red, 0; green, 0; blue, 0 }  ,fill opacity=1 ] (266.67,284.88) .. controls (266.67,284.1) and (267.3,283.47) .. (268.08,283.47) .. controls (268.87,283.47) and (269.5,284.1) .. (269.5,284.88) .. controls (269.5,285.67) and (268.87,286.3) .. (268.08,286.3) .. controls (267.3,286.3) and (266.67,285.67) .. (266.67,284.88) -- cycle ;
%Shape: Circle [id:dp0950597415013762] 
\draw  [fill={rgb, 255:red, 0; green, 0; blue, 0 }  ,fill opacity=1 ] (235.67,285.22) .. controls (235.67,284.43) and (236.3,283.8) .. (237.08,283.8) .. controls (237.87,283.8) and (238.5,284.43) .. (238.5,285.22) .. controls (238.5,286) and (237.87,286.63) .. (237.08,286.63) .. controls (236.3,286.63) and (235.67,286) .. (235.67,285.22) -- cycle ;
%Shape: Circle [id:dp8695180993559065] 
\draw  [fill={rgb, 255:red, 0; green, 0; blue, 0 }  ,fill opacity=1 ] (164.33,284.22) .. controls (164.33,283.43) and (164.97,282.8) .. (165.75,282.8) .. controls (166.53,282.8) and (167.17,283.43) .. (167.17,284.22) .. controls (167.17,285) and (166.53,285.63) .. (165.75,285.63) .. controls (164.97,285.63) and (164.33,285) .. (164.33,284.22) -- cycle ;
%Shape: Circle [id:dp33080127323572006] 
\draw  [fill={rgb, 255:red, 0; green, 0; blue, 0 }  ,fill opacity=1 ] (132,283.22) .. controls (132,282.43) and (132.63,281.8) .. (133.42,281.8) .. controls (134.2,281.8) and (134.83,282.43) .. (134.83,283.22) .. controls (134.83,284) and (134.2,284.63) .. (133.42,284.63) .. controls (132.63,284.63) and (132,284) .. (132,283.22) -- cycle ;
%Straight Lines [id:da16503479051409764] 
\draw    (150,218.3) -- (150.78,244.97) ;
\draw [shift={(150.83,246.97)}, rotate = 268.33] [color={rgb, 255:red, 0; green, 0; blue, 0 }  ][line width=0.75]    (10.93,-3.29) .. controls (6.95,-1.4) and (3.31,-0.3) .. (0,0) .. controls (3.31,0.3) and (6.95,1.4) .. (10.93,3.29)   ;
%Straight Lines [id:da5422033794608377] 
\draw    (249.67,218.3) -- (250.44,244.97) ;
\draw [shift={(250.5,246.97)}, rotate = 268.33] [color={rgb, 255:red, 0; green, 0; blue, 0 }  ][line width=0.75]    (10.93,-3.29) .. controls (6.95,-1.4) and (3.31,-0.3) .. (0,0) .. controls (3.31,0.3) and (6.95,1.4) .. (10.93,3.29)   ;
%Shape: Ellipse [id:dp012646599291424643] 
\draw   (216.2,191.05) .. controls (216.2,182.9) and (231.87,176.3) .. (251.2,176.3) .. controls (270.53,176.3) and (286.2,182.9) .. (286.2,191.05) .. controls (286.2,199.2) and (270.53,205.8) .. (251.2,205.8) .. controls (231.87,205.8) and (216.2,199.2) .. (216.2,191.05) -- cycle ;
%Shape: Ellipse [id:dp2643993617013467] 
\draw   (117.53,189.05) .. controls (117.53,180.9) and (133.2,174.3) .. (152.53,174.3) .. controls (171.86,174.3) and (187.53,180.9) .. (187.53,189.05) .. controls (187.53,197.2) and (171.86,203.8) .. (152.53,203.8) .. controls (133.2,203.8) and (117.53,197.2) .. (117.53,189.05) -- cycle ;
%Shape: Circle [id:dp538076281421829] 
\draw  [fill={rgb, 255:red, 0; green, 0; blue, 0 }  ,fill opacity=1 ] (165.33,202.55) .. controls (165.33,201.77) and (165.97,201.13) .. (166.75,201.13) .. controls (167.53,201.13) and (168.17,201.77) .. (168.17,202.55) .. controls (168.17,203.33) and (167.53,203.97) .. (166.75,203.97) .. controls (165.97,203.97) and (165.33,203.33) .. (165.33,202.55) -- cycle ;
%Shape: Circle [id:dp9312671768383817] 
\draw  [fill={rgb, 255:red, 0; green, 0; blue, 0 }  ,fill opacity=1 ] (133,201.55) .. controls (133,200.77) and (133.63,200.13) .. (134.42,200.13) .. controls (135.2,200.13) and (135.83,200.77) .. (135.83,201.55) .. controls (135.83,202.33) and (135.2,202.97) .. (134.42,202.97) .. controls (133.63,202.97) and (133,202.33) .. (133,201.55) -- cycle ;
%Curve Lines [id:da05031059149717576] 
\draw    (236.54,204.39) .. controls (243.71,199.22) and (257,197.39) .. (267,204.39) ;

% Text Node
\draw (150.5,165.63) node [anchor=north west][inner sep=0.75pt]  [font=\tiny] [align=left] {$\displaystyle 2$};
% Text Node
\draw (147.83,194.63) node [anchor=north west][inner sep=0.75pt]  [font=\tiny] [align=left] {$\displaystyle 2$};
% Text Node
\draw (211.5,196.3) node [anchor=north west][inner sep=0.75pt]  [font=\tiny] [align=left] {$\displaystyle 2$};

\end{tikzpicture}

\caption{The quotient covers of the {three} twisted covers from Example \ref{ex-twotwistedcovers}, see Figure \ref{fig:twotwistedcovers}.}
        \label{fig:quotientcover}
    \end{figure}
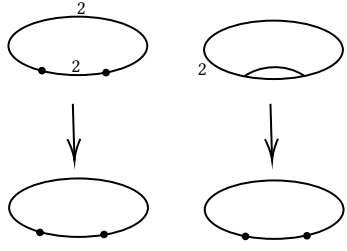
\end{example}

\begin{proposition}\label{prop-automquotient}
Let  $\overline{\pi}:\overline{\Gamma}\rightarrow E$ be a (connected) quotient of a tropical twisted cover (see Definition \ref{def-quotient}). Assume $\overline{\Gamma}$ has $c$ $2$-valent vertices and is of genus $g'$.
Then

$$ \sum_{\pi} \frac{1}{\sharp \Aut(\pi)}= \frac{2^{g'}-\delta_{0c}}{2^{c+1}\cdot \sharp \Aut(\overline{\pi})},$$
where the sum goes over all (connected) twisted tropical covers  $\pi:\Gamma\rightarrow E$  with involution $\iota$ whose quotient $\Gamma/\iota\rightarrow E$ equals $\overline{\pi}:\overline{\Gamma}\rightarrow E$ and $\delta_{0c}=1$ if $c=0$ and $0$ else.

\end{proposition}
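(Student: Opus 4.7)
The plan is to translate the enumeration of twisted tropical covers $\pi\colon\Gamma\to E$ with quotient $\overline{\pi}$ into an enumeration of branched double covers $q\colon\Gamma\to\overline{\Gamma}$ whose branch locus is exactly the set $S$ of the $c$ two-valent vertices of $\overline{\Gamma}$, and then evaluate the resulting weighted sum by a mass-formula argument. I set $V_1:=V(\overline{\Gamma})\setminus S$ and split $E(\overline{\Gamma})$ into the subsets $E_0, E_1, E_2$ of edges with $0$, $1$, resp.\ $2$ endpoints in $S$.

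First I would parametrise such a $q$ combinatorially. After choosing labels $\pm$ on the two preimages of every $v\in V_1$, the cover is encoded by a twist $\tau_e\in\mathbb{Z}/2$ for each $e\in E_0$ recording whether its two lifts are parallel or crossed; the lifts over $E_1$- and $E_2$-edges are forced by the graph structure. Two labellings produce isomorphic $q$ precisely when their twists differ by a $\mathbb{Z}/2$-coboundary in the cellular cochain complex of the subgraph $G:=(V_1,E_0)$, so isomorphism classes biject with $H^1(G,\mathbb{Z}/2)$, giving $2^{b_1(G)}$ classes. A short propagation argument --- tracking that $\iota$ must swap the two fibre points over every $V_1$-vertex reached from a branch point along edges --- shows that for $c\geq 1$ every such cover is connected, while for $c=0$ exactly one class is disconnected, namely the trivial $\overline{\Gamma}\sqcup\overline{\Gamma}$, leaving $2^{b_1(G)}-\delta_{0c}$ connected classes.

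Next I would compute $b_1(G)$ and the order of the subgroup $K\subseteq\Aut(\pi)$ of automorphisms covering $\mathrm{id}_{\overline{\Gamma}}$. Using $|V|=|V_1|+c$, $|E_1|+2|E_2|=2c$, and $\chi(\overline{\Gamma})=1-g'$, a direct Euler-characteristic calculation yields
\begin{equation*}
b_1(G) \;=\; b_0(G) + g' - 1 - c + |E_2|.
\end{equation*}
An element of $K$ is a consistent choice of ``swap or keep'' on each $2$-element fibre, and the local compatibility at each edge forces the $V_1$-swap data to be constant on every connected component of $G$, leaves the $E_2$-swap data free and determines everything else, so $|K|=2^{b_0(G)+|E_2|}$. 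Since $K$ is abelian and contains $\iota$, all of $K$ commutes with $\iota$; together with the fact that for a degree-$2$ Galois cover every $\overline{f}\in\Aut(\overline{\pi})$ stabilising the isomorphism class of $q$ admits a lift commuting with $\iota$, this yields a short exact sequence $1\to K\to\Aut(\pi)\to H_\pi\to 1$, with $H_\pi\leq\Aut(\overline{\pi})$ the stabiliser.

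Finally, by orbit--stabiliser for the $\Aut(\overline{\pi})$-action on isomorphism classes of $q$,
\begin{equation*}
\sum_{\pi}\frac{1}{|\Aut(\pi)|}
\;=\; \frac{1}{|K|}\sum_{\pi}\frac{1}{|H_\pi|}
\;=\; \frac{2^{b_1(G)}-\delta_{0c}}{|K|\cdot|\Aut(\overline{\pi})|},
\end{equation*}
and substituting the formulas for $b_1(G)$ and $|K|$ collapses the dependence on $b_0(G)$ and $|E_2|$, producing the claimed $(2^{g'}-\delta_{0c})/(2^{c+1}|\Aut(\overline{\pi})|)$. The step I expect to require the most care is the description of $K$: one must rule out further ``exotic'' deck automorphisms (such as extra edge-swaps that could arise if a self-loop in $E_0$ lifted to a banana-shaped double edge in $\Gamma$), which I expect to be excluded by the harmonicity condition on $\overline{\pi}$ inherited from the tropical-cover structure.
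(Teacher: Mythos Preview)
Your proposal is correct and reaches the same formula, but by a tighter and more systematic route than the paper. The paper argues informally: for $c=0$ it picks a spanning tree of $\overline{\Gamma}$, takes two copies as ``top'' and ``bottom'', and notes that each of the $g'$ non-tree edges can be glued in two ways, yielding $2^{g'}$ preimages (minus one disconnected one), then divides by the involution and by $\Aut(\overline{\pi})$; for $c>0$ it simply asserts that all preimages are connected and that each $4$-valent vertex contributes an extra factor of $2$ to the automorphism group. You replace this with a genuine cohomological classification --- branched double covers of $\overline{\Gamma}$ up to deck isomorphism biject with $H^1(G,\mathbb{Z}/2)$ for $G=(V_1,E_0)$ --- together with an exact computation of the deck group $K$ and a clean orbit--stabiliser step. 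The payoff is that your intermediate quantities are literally correct: when $\overline{\Gamma}$ is a cycle on two $2$-valent vertices there is exactly one cover with $|K|=4$, matching your $2^{b_1(G)}=1$ and $|K|=2^{b_0(G)+|E_2|}=4$, whereas the paper's heuristic ``$2^{g'}$ choices and $2^{c+1}$ automorphisms'' gives $2$ and $8$ --- the errors cancel, but only your bookkeeping makes this transparent.

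Your residual worry about self-loops in $E_0$ is harmless, and for exactly the reason you anticipated: a $3$-valent vertex of $\overline{\Gamma}$ carrying a self-loop would violate the balancing condition (the loop contributes equal weight to incoming and outgoing, forcing the remaining half-edge to have weight $0$), so no such edge can occur in a quotient cover. Even if one did, the ``banana swap'' you fear would cover the reversal of the self-loop in $\overline{\Gamma}$ rather than the identity, hence would land in $H_\pi\subseteq\Aut(\overline{\pi})$ and not in $K$; your formula $|K|=2^{b_0(G)+|E_2|}$ would survive intact.
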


\begin{proof}
Assume momentarily we had a rational graph $\overline{\Gamma}$ without any $2$-valent vertices and we would like to understand its preimages under taking a quotient with respect to an involution $\iota$. That means, we take two copies of every edge of $\overline{\Gamma}$ such that the involution exchanges the edges in the pair. When drawing a picture, we like to draw one edge on the top and one on the bottom. Now let us consider an adjacent vertex, and another edge starting from this vertex. Again, we take two copies, but since the involution exchanges the two it does not matter which we draw on top and which we draw on the bottom. Thus, for the whole rational graph, we have a unique preimage under taking the quotient which just consists of two disjoint copies of $\overline{\Gamma}$.

Next, let us consider a graph $\overline{\Gamma}$ of genus $g'>0$, but momentarily still without any $2$-valent vertices.
Pick $g'$ edges $e_1,\ldots,e_{g'}$ whose complement in $\overline{\Gamma}$ form a tree. To understand the preimage of taking the quotient, we can start by taking the preimage of the tree and putting in two copies for each of the missing edges $e_1,\ldots,e_{g'}$. As discussed above, the preimage of the tree just consists of two copies which we call the top and the bottom part. When we insert a pair of edges for $e_1$, we now have two options: we can either let one connect top with top, and the other bottom with bottom, or both can connect top with bottom. We have the same choice for all $g'$ edges, yielding $2^{g'}$ choices for preimages under taking the quotient. However, one of these (where we always connect top with top and bottom with bottom) is disconnected and should therefore be discarded. Also, not all the $2^{g'}$ choices have to be distinct, and this happens in the presence of automorphisms of $\overline{\pi}$: the latter arise due to parallel edges which are mapped in the same way (in particular, with the same weight). Assume $e_1$ is one of two parallel edges and $e_2$ is an edge of a cycle that is mapped to the cycle of the elliptic curve. If we require the two preimages of $e_1$ to connect top with bottom, then we can get the same picture for both choices for the two preimages of $e_2$, see Figure \ref{fig-autom}. Dividing by the size of the automorphism group of $\overline{\pi}$, we even out such overcountings however. 

Finally, let us consider what happens in the presence of $2$-valent vertices. First, every choice of preimage will be connected, so we do not have to subtract one when counting possibilities. Second, for every $4$-valent vertex in each preimage $\pi$, we can exchange the edges of one adjacent twisted pair, leading to an extra automorphism of $\pi$ which descends to the identity on the quotient $\overline{\pi}$.

The extra factor of $2$ in the denominator arises because of the involution, which yields an extra contribution to the automorphism group of each preimage.

\end{proof}

\begin{remark}
    We note that \cref{prop-automquotient} generalises to covers of $\mathbb{R}$, and disconnected tropical twisted covers as well. Indeed, let $\overline{\pi}\colon \overline{\Gamma}\to \mathbb{R}$ be a quotient of a tropical twisted cover with $c$ $2$-valent vertices and $r$ connected components, each of genus $g_i$. Thus, the genus of $\overline{\Gamma}$ is $g'=\sum g_i-r+1$. As in the proof of \cref{prop-automquotient}, we obtain $2^{\sum g_i}$ many preimages under taking the quotient. Now, we aim to count automorphisms. The argument is the same as in the proof above with the difference, that the involutions acts indepedently on each component and thus, we obtain a factor in the denominator of $2^r$. Thus, in total, we obtain
    \begin{equation}
        \sum_{\pi}\frac{1}{\sharp\mathrm{Aut}(\pi)}=\frac{2^{\sum g_i}}{2^r2^c\sharp\mathrm{Aut}(\overline{\pi})},
    \end{equation}
    where the sum now runs over possibly disconnected twisted covers $\pi$ with quotient $\overline{\pi}$. To conclude, we observe however that $\sum g_i=g'+r+1$ and thus, we obtain
        \begin{equation}
        \sum_{\pi}\frac{1}{\sharp\mathrm{Aut}(\pi)}=\frac{2^{g'}}{2^{c+1}\sharp\mathrm{Aut}(\overline{\pi})}.
    \end{equation}
    The only difference to the connected case in \cref{prop-automquotient} is the factor $\delta_{0,c}$ that ensured connectedness which obviously does not play a role here.
\end{remark}

\begin{figure}
    \centering

\tikzset{every picture/.style={line width=0.75pt}} %set default line width to 0.75pt        

\begin{tikzpicture}[x=0.75pt,y=0.75pt,yscale=-1,xscale=1]
%uncomment if require: \path (0,784); %set diagram left start at 0, and has height of 784

%Curve Lines [id:da07075332378696741] 
\draw    (51.5,206.89) .. controls (64.5,199.86) and (77.17,203.19) .. (81.5,206.89) ;
%Curve Lines [id:da7285893152555358] 
\draw    (51.5,206.89) .. controls (62.17,212.19) and (70.5,213.19) .. (81.5,206.89) ;
%Curve Lines [id:da8984192157976152] 
\draw    (51.5,206.89) .. controls (-50.25,179.84) and (182.5,185.84) .. (81.5,206.89) ;
%Curve Lines [id:da5228130828494711] 
\draw    (216.75,152.64) .. controls (229.75,145.61) and (242.42,148.94) .. (246.75,152.64) ;
%Curve Lines [id:da11452795341922262] 
\draw    (216.75,152.64) .. controls (227.42,157.94) and (235.75,158.94) .. (246.75,152.64) ;
%Curve Lines [id:da5525876484671042] 
\draw    (216.75,152.64) .. controls (115,125.59) and (347.75,131.59) .. (246.75,152.64) ;
%Curve Lines [id:da8671809199297692] 
\draw    (217.25,199.14) .. controls (230.25,192.11) and (242.92,195.44) .. (247.25,199.14) ;
%Curve Lines [id:da6750767357615831] 
\draw    (217.25,199.14) .. controls (227.92,204.44) and (236.25,205.44) .. (247.25,199.14) ;
%Curve Lines [id:da6391103415708536] 
\draw    (217.25,199.14) .. controls (115.5,172.09) and (348.25,178.09) .. (247.25,199.14) ;
%Curve Lines [id:da4441267784612152] 
\draw    (335.5,153.64) .. controls (348.5,146.61) and (361.17,149.94) .. (365.5,153.64) ;
%Curve Lines [id:da2474358952727378] 
\draw    (335.5,153.64) .. controls (344.63,161.59) and (352.63,189.09) .. (366,200.14) ;
%Curve Lines [id:da3674046557792887] 
\draw    (335.5,153.64) .. controls (233.75,126.59) and (466.5,132.59) .. (365.5,153.64) ;
%Curve Lines [id:da889013804273259] 
\draw    (336,200.14) .. controls (344.63,192.09) and (361.63,158.84) .. (365.5,153.64) ;
%Curve Lines [id:da09871867187546768] 
\draw    (336,200.14) .. controls (346.67,205.44) and (355,206.44) .. (366,200.14) ;
%Curve Lines [id:da14486258758168846] 
\draw    (336,200.14) .. controls (234.25,173.09) and (467,179.09) .. (366,200.14) ;
%Curve Lines [id:da5796625193352869] 
\draw    (214.38,252.89) .. controls (227.38,245.86) and (240.04,249.19) .. (244.38,252.89) ;
%Curve Lines [id:da3672793974948507] 
\draw    (214.38,252.89) .. controls (225.04,258.19) and (233.38,259.19) .. (244.38,252.89) ;
%Curve Lines [id:da8902320278630997] 
\draw    (214.38,252.89) .. controls (187.25,219.84) and (347,263.34) .. (244.88,299.39) ;
%Curve Lines [id:da4133197326104514] 
\draw    (214.88,299.39) .. controls (227.88,292.36) and (240.54,295.69) .. (244.88,299.39) ;
%Curve Lines [id:da013005037552850651] 
\draw    (214.88,299.39) .. controls (225.54,304.69) and (233.88,305.69) .. (244.88,299.39) ;
%Curve Lines [id:da37748734157174724] 
\draw    (214.88,299.39) .. controls (121.25,212.09) and (345.38,231.84) .. (244.38,252.89) ;
%Curve Lines [id:da29944492639684483] 
\draw    (333.88,255.59) .. controls (344.13,269.09) and (360.04,298.39) .. (364.38,302.09) ;
%Curve Lines [id:da3404728852697664] 
\draw    (333.88,255.59) .. controls (344.63,250.59) and (354.88,252.34) .. (363.88,255.59) ;
%Curve Lines [id:da1857858277214538] 
\draw    (333.88,255.59) .. controls (308.63,216.09) and (465.38,281.04) .. (364.38,302.09) ;
%Curve Lines [id:da39518728878251863] 
\draw    (334.38,302.09) .. controls (343.38,292.34) and (360.63,256.09) .. (363.88,255.59) ;
%Curve Lines [id:da6864421440820845] 
\draw    (334.38,302.09) .. controls (345.04,307.39) and (353.38,308.39) .. (364.38,302.09) ;
%Curve Lines [id:da5531822857920471] 
\draw    (334.38,302.09) .. controls (233.13,202.84) and (464.88,234.54) .. (363.88,255.59) ;

% Text Node
\draw (35.42,183.75) node  [font=\footnotesize] [align=left] {\begin{minipage}[lt]{13.94pt}\setlength\topsep{0pt}
$\displaystyle 2$
\end{minipage}};
% Text Node
\draw (73,218.5) node  [font=\footnotesize] [align=left] {\begin{minipage}[lt]{13.94pt}\setlength\topsep{0pt}
$\displaystyle 1$
\end{minipage}};
% Text Node
\draw (73,196.75) node  [font=\footnotesize] [align=left] {\begin{minipage}[lt]{13.94pt}\setlength\topsep{0pt}
$\displaystyle 1$
\end{minipage}};
% Text Node
\draw (198.42,130.5) node  [font=\footnotesize] [align=left] {\begin{minipage}[lt]{13.94pt}\setlength\topsep{0pt}
$\displaystyle 2$
\end{minipage}};
% Text Node
\draw (237,164.25) node  [font=\footnotesize] [align=left] {\begin{minipage}[lt]{13.94pt}\setlength\topsep{0pt}
$\displaystyle 1$
\end{minipage}};
% Text Node
\draw (235.5,142.25) node  [font=\footnotesize] [align=left] {\begin{minipage}[lt]{13.94pt}\setlength\topsep{0pt}
$\displaystyle 1$
\end{minipage}};
% Text Node
\draw (199.17,175.75) node  [font=\footnotesize] [align=left] {\begin{minipage}[lt]{13.94pt}\setlength\topsep{0pt}
$\displaystyle 2$
\end{minipage}};
% Text Node
\draw (234,190.25) node  [font=\footnotesize] [align=left] {\begin{minipage}[lt]{13.94pt}\setlength\topsep{0pt}
$\displaystyle 1$
\end{minipage}};
% Text Node
\draw (238.25,211.64) node  [font=\footnotesize] [align=left] {\begin{minipage}[lt]{13.94pt}\setlength\topsep{0pt}
$\displaystyle 1$
\end{minipage}};
% Text Node
\draw (321.67,130) node  [font=\footnotesize] [align=left] {\begin{minipage}[lt]{13.94pt}\setlength\topsep{0pt}
$\displaystyle 2$
\end{minipage}};
% Text Node
\draw (340.75,192) node  [font=\footnotesize] [align=left] {\begin{minipage}[lt]{13.94pt}\setlength\topsep{0pt}
$\displaystyle 1$
\end{minipage}};
% Text Node
\draw (356,159.5) node  [font=\footnotesize] [align=left] {\begin{minipage}[lt]{13.94pt}\setlength\topsep{0pt}
$\displaystyle 1$
\end{minipage}};
% Text Node
\draw (320.17,176.75) node  [font=\footnotesize] [align=left] {\begin{minipage}[lt]{13.94pt}\setlength\topsep{0pt}
$\displaystyle 2$
\end{minipage}};
% Text Node
\draw (370,191.5) node  [font=\footnotesize] [align=left] {\begin{minipage}[lt]{13.94pt}\setlength\topsep{0pt}
$\displaystyle 1$
\end{minipage}};
% Text Node
\draw (354.75,212.39) node  [font=\footnotesize] [align=left] {\begin{minipage}[lt]{13.94pt}\setlength\topsep{0pt}
$\displaystyle 1$
\end{minipage}};
% Text Node
\draw (189.29,257.75) node  [font=\footnotesize] [align=left] {\begin{minipage}[lt]{13.94pt}\setlength\topsep{0pt}
$\displaystyle 2$
\end{minipage}};
% Text Node
\draw (235.38,288.75) node  [font=\footnotesize] [align=left] {\begin{minipage}[lt]{13.94pt}\setlength\topsep{0pt}
$\displaystyle 1$
\end{minipage}};
% Text Node
\draw (234.63,311.89) node  [font=\footnotesize] [align=left] {\begin{minipage}[lt]{13.94pt}\setlength\topsep{0pt}
$\displaystyle 1$
\end{minipage}};
% Text Node
\draw (320.79,235.45) node  [font=\footnotesize] [align=left] {\begin{minipage}[lt]{13.94pt}\setlength\topsep{0pt}
$\displaystyle 2$
\end{minipage}};
% Text Node
\draw (352.13,263.2) node  [font=\footnotesize] [align=left] {\begin{minipage}[lt]{13.94pt}\setlength\topsep{0pt}
$\displaystyle 1$
\end{minipage}};
% Text Node
\draw (343.13,289.2) node  [font=\footnotesize] [align=left] {\begin{minipage}[lt]{13.94pt}\setlength\topsep{0pt}
$\displaystyle 1$
\end{minipage}};
% Text Node
\draw (393.79,280.2) node  [font=\footnotesize] [align=left] {\begin{minipage}[lt]{13.94pt}\setlength\topsep{0pt}
$\displaystyle 2$
\end{minipage}};
% Text Node
\draw (371.38,293.7) node  [font=\footnotesize] [align=left] {\begin{minipage}[lt]{13.94pt}\setlength\topsep{0pt}
$\displaystyle 1$
\end{minipage}};
% Text Node
\draw (354.63,314.84) node  [font=\footnotesize] [align=left] {\begin{minipage}[lt]{13.94pt}\setlength\topsep{0pt}
$\displaystyle 1$
\end{minipage}};
% Text Node
\draw (267.04,281.5) node  [font=\footnotesize] [align=left] {\begin{minipage}[lt]{13.94pt}\setlength\topsep{0pt}
$\displaystyle 2$
\end{minipage}};

\end{tikzpicture}

    \caption{On the left, a quotient cover $\overline{\pi}$ of genus $2$. On the right, the $4$ choices of preimage under taking the quotient, as in the proof of Proposition \ref{prop-automquotient}. Because of the automorphism of the quotient cover, the upper right and the lower right choice get identified. The top middle is disconnected and should be discarded. The lower middle has an automorphism group of size $4$ due to the automorphism of $\overline{\pi}$. Altogether, we have $\frac{1}{2}+\frac{1}{4}=\frac{4-1}{2\cdot 2}$ preimages counted with one over the size of their automorphism group, as predicted by Proposition \ref{prop-automquotient}.}
    \label{fig-autom}
\end{figure}
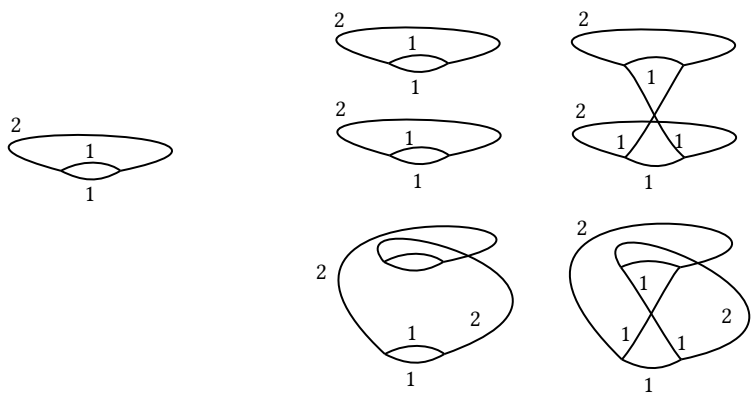

%\begin{remark}\label{rem-automR}
%An analogous statement as in Proposition \ref{prop-automquotient} relating automorphisms of a twisted cover and automorphisms of its quotient cover also holds for the case of twisted tropical covers of $\mathbb{R}$. There we have
%$$\sharp \Aut(\pi)=2^{c+1}\cdot \sharp \Aut(\overline{\pi}).$$
%The extra factor of $2$ arises since $\mathbb{R}$ is not a circle, so we can apply all automorphisms from $4$-valent vertices and $\iota$ independently.
%\end{remark}
%{\color{blue} First I thought I would really need quotient covers and also these automorphism computations in the proof of the correspondence thm. Turns out not so much after all. Nevertheless, I leave it here, at least for me moment, since I found it sort of helpful to compute automorphisms. I use it later for counting quotient graphs instead...}

We are now ready to define twisted tropical Hurwitz numbers of an elliptic curve.

\begin{definition}[Twisted tropical Hurwitz number of an elliptic curve]
\label{def:twisttrophn}
We define the tropical twisted  Hurwitz 
number of an elliptic curve $\tilde{h}^{\trop}_{d,g}$ to be  the weighted enumeration of equivalence classes of twisted tropical covers of degree $d$ and genus $g$ of a tropical elliptic curve $E$, such that each equivalence class $[\pi\colon \Gamma\to E]$ is counted with multiplicity
\begin{equation}
    2^{g-1}\cdot \frac{1}{|\Aut(\pi)|}\cdot\prod_V (\omega_V-1)\prod_e\omega(e),
\end{equation}
where the first product goes over all $4$-valent vertices and $\omega_V$ denotes the weight of the adjacent edges, while the second product is taken over all edges of the quotient graph $\Gamma/\iota$ and $\omega(e)$ denotes their weights.

\end{definition}

\begin{example}\label{ex-twistedtropicalnumber}
    The twisted tropical Hurwitz number $\tilde{h}^{\trop}_{2,3}$ equals $16$. There are five twisted tropical covers for $d=2$ and $g=3$. We may obtain all from the sketches in Figure \ref{fig:twotwistedcovers}. The middle and right depicted maps each give rise to two tropical covers depending on the labelling of the branch points, i.e. for the tropical elliptic curve $E$ on the bottom, either the left vertex is labelled $p_1$ and the right vertex is labelled $p_2$ or the left vertex is labelled $p_2$ and the right vertex is labelled $p_1$.
    The left cover in Figure \ref{fig:twotwistedcovers} has multiplicity $$2^2\cdot \frac{1}{4}\cdot (2-1)\cdot (2-1)\cdot 2\cdot 2=4.$$
    
    Each cover coming from the middle picture has multiplicity 
    $$2^2\cdot \frac{1}{2}\cdot 2=4.$$

Each cover coming from the right picture has multiplicity
$$2^2\cdot \frac{1}{4}\cdot 2=2.$$
    
    For the sizes of the automorphism groups, see Example \ref{ex-automorphisms}. Thus, we obtain $\tilde{h}^{\trop}_{2,3}=16$ in total. Note that this number coincides with the twisted Hurwitz number we computed in Example \ref{ex-twistednumber}.
\end{example}

\begin{remark}\label{rem-countquotients}
Notice that by Proposition \ref{prop-automquotient}, twisted tropical Hurwitz numbers of an elliptic curve can also be determined by counting quotient covers directly.
If $\Gamma$ has $c$ $4$-valent vertices and is of genus $g$, then by an Euler characteristics computation  the quotient $\overline{\Gamma}$ has genus $g'=\frac{1}{2}\cdot (g-c+1)$. For the multiplicities of the preimages of a quotient cover, only the factor $\frac{1}{|\Aut(\pi)|}$ differ, all others remain. But the sum of the $\frac{1}{|\Aut(\pi)|}$ is obtained via Proposition \ref{prop-automquotient}, and so a quotient cover $\overline{\pi}$ has to be counted with multiplicity
\begin{equation}
  \frac{2^{g'}-\delta_{0c}}{2^{c+1}}  \cdot 2^{g-1}\cdot \frac{1}{|\Aut(\overline{\pi})|}\cdot\prod_V (\omega_V-1)\prod_e\omega(e)=(2^{g'}-\delta_{0c})  \cdot 2^{2g'-3}\cdot \frac{1}{|\Aut(\overline{\pi})|}\cdot\prod_V (\omega_V-1)\prod_e\omega(e),
\end{equation}

where $c$ denotes the number of $2$-valent vertices of the source graph $\Gamma$, the first product goes over all $2$-valent vertices and $\omega_V$ denotes the weight of the adjacent edges, while the second product is taken over all edges and $\omega(e)$ denotes their weights.
    
\end{remark}

The equality observed in Example \ref{ex-twistedtropicalnumber} is no coincidence, as shown in the following Theorem:

\begin{theorem}[Correspondence Theorem]\label{thm-corres}
The twisted Hurwitz number of $E$ equals its tropical counterpart, i.e.\
$$\tilde{h}^{\trop}_{d,g}=\tilde{h}_{d,g}.$$
\end{theorem}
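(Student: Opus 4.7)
The plan is to establish a weighted bijection between factorizations satisfying Definition~\ref{def:twisthur} and decorated twisted tropical covers, merging the genus-zero twisted correspondence from \cite{HM22} with the elliptic gluing argument familiar from \cite{BBBM13}. I would fix a generic base point $q_0 \in E$ distinct from the branch points $p_1,\ldots,p_{g-1}$ and cut $E$ at $q_0$ to view it as an interval with identified endpoints, thereby reducing the elliptic target to a genus-zero-like situation decorated with an extra gluing datum around the cycle.

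In one direction, given a twisted tropical cover $\pi\colon \Gamma \to E$ with involution $\iota$ together with a labeling of $\pi^{-1}(q_0)$ by $\{1,\ldots,2d\}$ compatible with $\iota$ (so that $\iota$ acts as $\tau$ on the labels), the cycles of the resulting permutation $\sigma \in \mathbb{S}_{2d}$ record the ends of edges passing through $q_0$ together with their weights, a cycle of length $k$ corresponding to an edge of weight $k$. The compatibility of $\iota$ with $\tau$ forces $\sigma \in B_d^\sim$, since $\iota$ fixes no edge of $\Gamma$. Moving along $E$ past each branch point, the local monodromy contribution is a pair $(\eta_i, \tau\eta_i\tau)$ of $\tau$-conjugate transpositions: a pair of $3$-valent preimages yields an $\eta_i$ with $j_s\neq \tau(i_s)$ directly, while a $4$-valent preimage with edge-weight $\omega_V$ is encoded in such a transposition in one of $(\omega_V-1)$ distinct ways that all produce the same local quotient cover---this is the origin of the factor appearing in Definition~\ref{def:twisttrophn}. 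The gluing permutation $\alpha\in B_d$ arises from traversing the cycle of $E$, relation (4) of Definition~\ref{def:twisthur} is the statement that going once around $E$ recovers $\alpha\sigma\alpha^{-1}$, and transitivity of the generated subgroup corresponds to connectedness of $\Gamma$.

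To match multiplicities, I would fix $\pi$ and enumerate the factorizations that produce it. The factor $\prod_e \omega(e)$ counts cyclic relabelings of sheets within each $\sigma$-cycle, and the passage from the quotient graph $\Gamma/\iota$ to the twisted cover will contribute the prefactor $2^{g-1}/|\Aut(\pi)|$ via Proposition~\ref{prop-automquotient}, in parallel to the argument of \cite{HM22}. The global normalization $1/(2d)!!$ on the algebraic side is precisely the $2^d d!$ distinct labelings of $\pi^{-1}(q_0)$ compatible with $\iota$, so these prefactors cancel and the asserted equality $\tilde{h}^{\trop}_{d,g}=\tilde{h}_{d,g}$ will follow.

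The main obstacle will be the careful local analysis at $4$-valent vertices. Because $\eta_i$ is constrained by $j_s \neq \tau(i_s)$, a $4$-valent fixed vertex of weight $\omega_V$ cannot be encoded as a single obvious transposition, and one must verify that there are exactly $(\omega_V-1)$ valid choices of $\eta_i$ producing the same local quotient picture---equivalently, that exactly one choice per $\tau$-paired cycle of $\sigma$ is excluded by the interlocking of the $B_d^\sim$-condition on $\sigma$ with the $j_s\neq \tau(i_s)$-condition on $\eta_i$. Once this local count is pinned down, combining the contributions over all branch points with the global gluing datum $\alpha$ is then a bookkeeping exercise that delivers the correspondence.
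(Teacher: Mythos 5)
Your overall strategy matches the paper's: cut $E$ at a base point to reduce to a twisted tropical cover of $\mathbb{R}$, invoke the genus-zero twisted correspondence from \cite{HM22}, and then account for the gluing permutation $\alpha$. That is the right shape of argument. However, two things you gloss over are where the actual work lies, and one of your attributions is wrong. You credit the prefactor $2^{g-1}/|\Aut(\pi)|$ to Proposition~\ref{prop-automquotient}, but that proposition concerns how many twisted covers lie above a given \emph{quotient} $\overline\pi=\Gamma/\iota$ and produces a different $2$-power, namely $(2^{g'}-\delta_{0c})/(2^{c+1}|\Aut(\overline\pi)|)$; it is not used in the paper's proof of the Correspondence Theorem at all. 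The $2^{g-1}$ already sits inside the genus-zero tropical multiplicity from \cite{HM22} and comes from the two ways of labeling the pair $(\eta_i,\tau\eta_i\tau)$ at each of the $g-1$ branch points. Since your bijection is directly between factorizations and twisted covers (not quotient covers), $\Gamma/\iota$ and Proposition~\ref{prop-automquotient} should not enter your argument.

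Second, the step you wave off as ``a bookkeeping exercise'' --- counting the $\alpha\in B_d$ gluing the cut cover $\tilde\pi$ back to $\pi$ --- is the genuinely new content over the genus-zero case and is where a gap could open. The paper mirrors \cite[Proposition~4.9]{BBBM13} to obtain the count $\prod_{e'}\omega(e')^{c_{e'}}\cdot |\Aut(\tilde\pi)|/|\Aut(\pi)|$, where $e'$ ranges over twisted edge pairs of $\Gamma$ crossing $p_0$ and $c_{e'}$ is the number of crossings, and it uses that the constraint $\alpha\tau=\tau\alpha$ makes the gluing on one end of a twisted pair determine the gluing on its $\iota$-partner, so no extra factor of $2$ appears here. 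There is then a nontrivial cancellation between the $\prod_K 1/\omega_K$ factor (over cut-edge components of $\tilde\pi$, which is part of the \cite{HM22} multiplicity) and the $\omega(e')^{c_{e'}}$ factors, collapsing to $\prod_e\omega(e)$. Your sketch simply asserts that $\prod_e\omega(e)$ counts cyclic relabelings of sheets without tracking this cancellation. Conversely, the issue you single out as the ``main obstacle'' --- the $(\omega_V-1)$-fold local count at a $4$-valent vertex under the constraint $j_s\neq\tau(i_s)$ --- is already handled inside the cited genus-zero correspondence (\cite[Proposition 18, Remark 6]{HM22}) and can be cited rather than reproved.
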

\begin{proof} Let $\pi:\Gamma\rightarrow E$ be a twisted tropical cover of degree $d$. We pick a base point $p_0$ between $p_{g-1}$ and $p_1$ and cut the elliptic curve open at $p_0$. We also cut the preimages of $p_0$ under $\pi$, thus obtaining a twisted tropical cover $\tilde{\pi}:\tilde{\Gamma}\rightarrow \mathbb{R}$. In the untwisted case, this is explained in detail in Construction 4.4 in \cite{BBBM13}. %The quotient cover $\overline{\pi}$ from Definition \ref{def-quotient} can be cut open analogously, yielding the cover $\tilde{\overline{\pi}}$ of $\mathbb{R}$. These are both untwisted covers, so they fit into the setting of \cite{BBBM13}. 

The twisted Hurwitz number counts tuples
\begin{align}
    \tilde{h}_{d,g} &= \frac{1}{(2d)!!} \sharp \{(\sigma,\eta_1\ldots,\eta_{g-1},\alpha)\} 
    \end{align}
    as in Definition \ref{def-hur}. We can split these tuples and first list tuples of the form $\{(\sigma,\eta_1\ldots,\eta_{g-1})\}$, combining each such tuple with a list of possible  $\alpha$.
    Each tuple $\{(\sigma,\eta_1\ldots,\eta_{g-1})\}$ yields a twisted tropical cover $\tilde{\pi}$ of $\mathbb{R}$ as in Construction/Theorem 14 \cite{HM22}. This cover has left and right ends of weights given by the cycle lengths of $\sigma$. 
    For each twisted tropical cover of $\mathbb{R}$ having ends of the same weights in both directions, the number of tuples of the form above leading to this cover equals its tropical multiplicity by the correspondence theorem (see Proposition 18 and Remark 6 in \cite{HM22}). The tropical multiplicity equals
    $$2^{g-1}\prod_V (\omega_V-1) \prod_e\omega (e) \prod_K \frac{1}{\omega_K}\cdot \frac{1}{|\Aut(\tilde{\pi})|}$$
    where the first product goes over all $4$-valent vertices $V$ and $\omega_V$ denotes the weight of its adjacent edges, the second product goes over all pairs of twisted internal edges of the source $\tilde{\Gamma}$ of the twisted tropical cover $\tilde{\pi}$, and the third over all twisted pairs of components $K$ which consist of a single edge of weight $\omega_K$. 
Combining tuples of the form $\{(\sigma,\eta_1\ldots,\eta_{g-1})\}$ with possible $\alpha$ amounts to gluing a twisted tropical cover of $\mathbb{R}$ to obtain a twisted tropical cover of $E$.
To determine the number of such gluings, we pass to the quotient covers on each side.
Given a twisted tropical cover $\pi$ of $E$ and its cut cover $\tilde{\pi}$, we consider the quotient cover $\overline{\pi}$ of $E$ and the quotient cut cover $\tilde{\overline{\pi}}$. Note that taking the quotient and cutting the cover commutes.

 When gluing a cut quotient cover of $\mathbb{R}$ to a quotient cover of $E$, we want to pair up left and right ends that should be glued. Each left end of the quotient cover corresponds to a pair of ends of the twisted cover. Assume the pair of cycles $c_1$, $\tau\circ c_1 \circ \tau$ corresponds to these two ends, and assume that our gluing merges the left end corresponding to $c_1$ with the right end corresponding to a cycle $c_2$ of the same length. We want to count the number of $\alpha$ that satisfy $c_2=\alpha\circ c_1\circ \alpha^{-1}$. Let $c_1=(c_{11},\ldots,c_{1\ell(c_1)})$, and $c_2=(c_{21},\ldots,c_{2\ell(c_1)})$. A choice of $\alpha$ is fixed by setting $\alpha(c_{11})=c_{2i}$ for any $i=1,\ldots,\ell(c_1)$. As we require that $\alpha \tau=\tau \alpha$ any element in the twisted cycle $\tau\circ c_1 \circ \tau$ of the form $\tau(c_{1j})$ must be mapped to $\tau(\alpha(c_{1j}))$ via $\alpha$. Thus, choosing a gluing on one of a pair of twisted ends of the cut cover fixes the gluing on the other. %For that reason, choices of $\alpha$ suitable for a tuple of the form $\{(\sigma,\eta_1\ldots,\eta_{g-1})\}$ corresponding to a cut twisted tropical cover of $\mathbb{R}$ are in bijection with choices of gluings $\overline{\alpha}$ of the quotient cover.

%Let us momentarily forget the twisting condition and treat $\pi$ as usual tropical covers. By
%Proposition 4.9 in \cite{BBBM13}, the number of $\alpha$ (which do not necessarily satisfy $\alpha \tau=\tau \alpha$ now)
%equals $$\prod_e \omega(e) \cdot \frac{\sharp\Aut{\tilde{\pi}}}{\sharp \Aut{\pi}},$$ where the product goes over \emph{all} edges $e$ which are cut when passing from $\pi$ to $\tilde{\pi}$, i.e.\ each pair of twisted edges appears twice. {\color{blue} I admit that citing Prop 4.9 is a bit cheated here, because the proof of this prop actually deals with particular cases for autom which are the only ones that exist for usual covers. Now we have others however. Nevertheless, I am quite convinced that the generalization holds the same. We can discuss whether we need to write an extra proof or whether we get away with a bit of cheating here.} 
By the same argument as in \cite[Proposition 4.9]{BBBM13} the number of such $\alpha$ is given by
\begin{equation}\label{equ-npi}\prod_{e'}\omega({e'})^{c_{e'}}\cdot\frac{|\Aut(\tilde\pi)|}{|\Aut(\pi)|},\end{equation} 
where the product goes over all pairs of twisted edges $e'$ of $\Gamma$ that contain a preimage of the base
point $p_0$ of $E$ and $c_{e'}$ denotes the number of preimages in $e'$,
$c_{e'}= \#  (\pi^{-1}(p_0)\cap e') $.

%By the above, requiring $\alpha \tau=\tau \alpha$ and reinforcing the twisted structure, for each pair of twisted edges we need to make only one choice, i.e.\ each factor $\omega(e)$ should appear only once.

%By the above, there are equally many ways to reglue the twisted cut cover to the twisted cover of $E$. As the twisted cut  cover has as many $4$-valent vertices as the twisted cover of $E$, the ratio $\frac{\sharp\Aut{\tilde{\overline{\pi}}}}{\sharp \Aut{\overline{\pi}}}$ equals the corresponding ratio before cutting open,
%$\frac{\sharp\Aut{\tilde{\pi}}}{\sharp \Aut{\pi}}$. Thus we can also write the number of ways $n_{\tilde{\pi},\pi}$ to reglue the cover $\pi$ of $E$ from the cut cover $\tilde{\pi}$ as
%$$\prod_e \omega(e) \cdot \frac{\sharp\Aut{\tilde{\pi}}}{\sharp \Aut{\pi}},$$
%where now the product goes over all pairs of twisted edges which are cut.

%Now we return to the twisted Hurwitz number, which is a count of tuples.

We can group the tuples in the set according to the twisted tropical cover $\pi:C\rightarrow E$ they provide under the cut-and-join construction, see Construction/Theorem 14 in \cite{HM22}. Thus we can write $\tilde{h}_{d,g}$ as
$$
\tilde{h}_{d,g}=\frac{1}{(2d)!!}\cdot \sum_{\pi}\#\left\{(\sigma,\eta_1\ldots,\eta_{g-1},\alpha)\textnormal{ yielding the cover }\pi\right\}.$$
For a fixed
cover $\pi$, instead of counting tuples yielding $\pi$, we can count tuples $(\sigma,\eta_1\ldots,\eta_{g-1})$ yielding the cut twisted tropical cover
$\tilde\pi$ and then multiply with the number of appropriate $\alpha$, which we denote by
$n_{\tilde\pi,\pi}$:
$$
\tilde{h}_{d,g}=\frac{1}{(2d)!!}\cdot\sum_\pi
\#\left\{(\sigma,\eta_1\ldots,\eta_{g-1})\textnormal{ that provide the cover }\tilde{\pi}\right\}\cdot
n_{\tilde{\pi},\pi}.$$

By the above, the count of the
tuples yielding a cover $\tilde\pi$ divided by $(2d)!!$ equals $$2^{g-1}\frac{1}{|\Aut(\tilde{\pi})|}\cdot \prod_V(\omega_V-1)\cdot\prod_{\tilde
e} \omega({\tilde e})\cdot\prod_K
\frac{1}{\omega_K}$$ where the first product goes over the $4$-valent vertices, the second over all pairs of twisted internal edges $\tilde{e}$ of $\tilde{\Gamma}$ of weight $\omega({\tilde{e}})$ and the third over 
all twisted pairs of components $K$ consisting of a single edge of weight $\omega_K$. From the above, the number $n_{\tilde{\pi},\pi}$ can be
substituted by the expression in \cref{equ-npi}.

We obtain
$$
\tilde{h}_{d,g}=\sum_\pi
\frac{1}{|\Aut(\tilde{\pi})|}2^{g-1}\cdot\prod_V(\omega_V-1)\cdot \prod_{\tilde{e}}\omega({\tilde{e}})
\cdot\prod_K \frac{1}{\omega_K}\cdot\prod_ { e' }
\omega({e'})^{c_{e'}}\cdot\frac{|\Aut(\tilde\pi)|}{|\Aut(\pi)|}.
$$
A pair of twisted edges $e'$ of $\Gamma$ of weight $\omega({e'})$ having $c_{e'}$
preimages over the base point provides exactly $c_{e'}-1$ pairs of single-edge-components of weight
$\omega({e'})$  in the cut cover $\tilde\pi$. Vice versa, each such pair of
components comes from a pair of edges with multiple preimages over the
base point. Therefore the expression $\prod_K \frac{1}{\omega_K}\cdot\prod_ { e' }
\omega({e'})^{c_{e'}}$ simplifies to $\prod_{e'} \omega({e'})$. We obtain
$$
\tilde{h}_{d,g}=\sum_\pi
2^{g-1}\frac{1}{|\Aut(\pi)|}\cdot \prod_V(\omega_V-1)\cdot \prod_e \omega(e) = \tilde{h}_{d,g}^{\trop}$$
 and the theorem is proved.

\end{proof}

\begin{example}
    Consider the twisted tropical cover $\pi$ depicted in Figure \ref{fig:twotwistedcovers} in the middle with the vertex on the left labelled $p_1$ and the vertex on the right labelled $p_2$.
    Cutting it open, we obtain the twisted tropical cover $\tilde{\pi}$ of $\mathbb{R}$ depicted in Figure \ref{fig:cutopen}.

\begin{figure}
    \centering

\tikzset{every picture/.style={line width=0.75pt}} %set default line width to 0.75pt        

\begin{tikzpicture}[x=0.75pt,y=0.75pt,yscale=-1,xscale=1]
%uncomment if require: \path (0,784); %set diagram left start at 0, and has height of 784

%Shape: Circle [id:dp28471858505701997] 
\draw  [fill={rgb, 255:red, 0; green, 0; blue, 0 }  ,fill opacity=1 ] (398.33,269.88) .. controls (398.33,269.1) and (398.97,268.47) .. (399.75,268.47) .. controls (400.53,268.47) and (401.17,269.1) .. (401.17,269.88) .. controls (401.17,270.67) and (400.53,271.3) .. (399.75,271.3) .. controls (398.97,271.3) and (398.33,270.67) .. (398.33,269.88) -- cycle ;
%Shape: Circle [id:dp0950597415013762] 
\draw  [fill={rgb, 255:red, 0; green, 0; blue, 0 }  ,fill opacity=1 ] (358.5,269.75) .. controls (358.5,268.97) and (359.13,268.33) .. (359.92,268.33) .. controls (360.7,268.33) and (361.33,268.97) .. (361.33,269.75) .. controls (361.33,270.53) and (360.7,271.17) .. (359.92,271.17) .. controls (359.13,271.17) and (358.5,270.53) .. (358.5,269.75) -- cycle ;
%Straight Lines [id:da5422033794608377] 
\draw    (379.17,221.33) -- (379.94,248) ;
\draw [shift={(380,250)}, rotate = 268.33] [color={rgb, 255:red, 0; green, 0; blue, 0 }  ][line width=0.75]    (10.93,-3.29) .. controls (6.95,-1.4) and (3.31,-0.3) .. (0,0) .. controls (3.31,0.3) and (6.95,1.4) .. (10.93,3.29)   ;
%Straight Lines [id:da09691040344548807] 
\draw    (320,270) -- (440,270) ;
%Straight Lines [id:da8834709074012617] 
\draw    (320,170) -- (360,170) ;
%Straight Lines [id:da22758442436819903] 
\draw    (320,200) -- (360,200) ;
%Straight Lines [id:da4222927198156833] 
\draw    (400,170) -- (440,170) ;
%Straight Lines [id:da43094026610758696] 
\draw    (400,200) -- (440,200) ;
%Straight Lines [id:da916403338526644] 
\draw    (360,200) -- (400,200) ;
%Straight Lines [id:da6038771721835345] 
\draw    (360,170) -- (400,170) ;
%Straight Lines [id:da6719322222200932] 
\draw    (360,170) -- (400,200) ;
%Straight Lines [id:da47587542347605793] 
\draw    (400,170) -- (382.5,183.97) ;
%Straight Lines [id:da2766353440928304] 
\draw    (378.17,186.97) -- (360,200) ;

% Text Node
\draw (329,161) node [anchor=north west][inner sep=0.75pt]  [font=\tiny] [align=left] {$\displaystyle 2$};
% Text Node
\draw (329,191) node [anchor=north west][inner sep=0.75pt]  [font=\tiny] [align=left] {$\displaystyle 2$};
% Text Node
\draw (421,161) node [anchor=north west][inner sep=0.75pt]  [font=\tiny] [align=left] {$\displaystyle 2$};
% Text Node
\draw (421,191) node [anchor=north west][inner sep=0.75pt]  [font=\tiny] [align=left] {$\displaystyle 2$};

\end{tikzpicture}

     \caption{The twisted tropical cover on the right in Figure \ref{fig:twotwistedcovers} cut open at the back.}
    \label{fig:cutopen}
\end{figure}
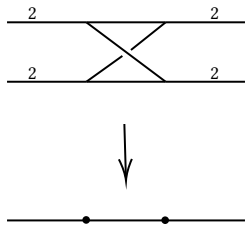

By the correspondence theorem for twisted tropical covers of $\mathbb{R}$ in \cite[Theorem 22]{HM22}, it accounts for $(2d)!!=8$ times its tropical multiplicity many tuples.
The cover has an automorphism group of size $2^2=4$, as we can independently exchange both pairs of twisted edges of weight $2$, with the edges of weight $1$ following along.
Its tropical multiplicity thus equals
$$ 2^2\cdot \frac{1}{4}=1.$$
This twisted tropical cover of $\mathbb{R}$ thus accounts for $8$ tuples of the form $(\sigma_1,\eta_1,\eta_2)$. By Lemma 16 \cite{HM22}, there are $2$ permutations suitable for $\sigma_1$, $(14)(23)$ and $(12)(34)$. Fix $\sigma_1=(14)(23)$ momentarily, the other choice is analogous. Then there are two choices for $\eta_1$, $(14)$ of $(23)$. For the next branch point, there are $2$ more choices for $\eta_2$, $(12)$ or $(34)$. Altogether, we obtain the $8$ tuples as expected.

The extra automorphism that the cut cover obtains (we have $\frac{|\Aut(\tilde{\pi})|}{|\Aut(\pi)|}=\frac{4}{2}=2$) allows to make an additional choice which left end should be glued to which right end.
Let us momentarily fix one of our $8$ tuples, $((14)(23), (14), (12))$. If we label all edges with the corresponding permutations, the two right ends are labeled with $(12)$ and $(34)$. Because of the extra automorphism, we can glue the left end labeled $(14)$ either to $(12)$ or to $(34)$. For each choice, we obtain as many $\alpha$ satisfying $\alpha\tau=\tau\alpha$ as the weight of one end, i.e.\ $2$. We thus obtain $4$ possible $\alpha$ to add to each of the $8$ tuples, yielding $32$ tuples of the form
$(\sigma_1,\eta_1,\eta_2,\alpha)$. 
For the tuple fixed above, the $4$ possible $\alpha$ we can add are
$$\{(24), (1234), (13), (1432)\}.$$
Dividing the $32$ tuples by $(2d)!!=8$, we expect the tropical multiplicity of the right cover of $E$ in Figure \ref{fig:twotwistedcovers} to be $4$. Indeed, in Example \ref{ex-twistedtropicalnumber} we already computed its tropical multiplicity to be
$$2^2\cdot \frac{1}{2}\cdot 2 = 4.$$

\end{example}

\begin{example}

We illustrate another example in \cref{fig:cuttingcovers8}. On the top left, we have the quotient cover $\overline{\pi}\colon\overline{\Gamma}\to E$ of an elliptic tropical twisted cover of degree $4$. Note that $\overline{\Gamma}$, as illustrated at the bottom of \cref{fig:cuttingcovers8} has three edges, two of weight $1$ and one of weight $2$. On the top right in this figure, we have the tropical twisted cover obtained by cutting $\overline{\pi}$ at $p_0$ and its preimages. In particular, we obtain two edges of weight $1$ arising from the same edge of $\overline{\Gamma}$. This is because this edge curls twice before reattaching again at the bottom to join to an edge of weight $2$.

\end{example}

 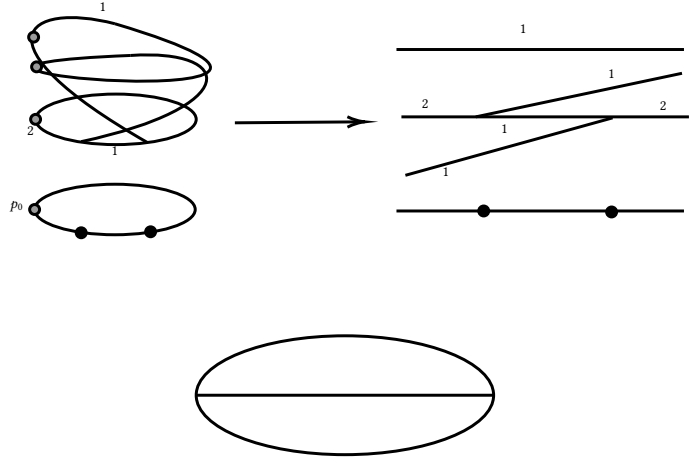
\begin{figure}

\scalebox{0.5}{

\tikzset{every picture/.style={line width=0.75pt}} %set default line width to 0.75pt        

\begin{tikzpicture}[x=0.75pt,y=0.75pt,yscale=-1,xscale=1,trim left=2.5cm]
%uncomment if require: \path (0,522); %set diagram left start at 0, and has height of 522

%Shape: Ellipse [id:dp60335160471602] 
\draw  [line width=2.25]  (183,235.5) .. controls (183,221.42) and (219.26,210) .. (264,210) .. controls (308.74,210) and (345,221.42) .. (345,235.5) .. controls (345,249.58) and (308.74,261) .. (264,261) .. controls (219.26,261) and (183,249.58) .. (183,235.5) -- cycle ;
%Shape: Ellipse [id:dp32001696796466383] 
\draw  [line width=2.25]  (184,144.5) .. controls (184,130.42) and (220.26,119) .. (265,119) .. controls (309.74,119) and (346,130.42) .. (346,144.5) .. controls (346,158.58) and (309.74,170) .. (265,170) .. controls (220.26,170) and (184,158.58) .. (184,144.5) -- cycle ;
%Curve Lines [id:da21068281276674072] 
\draw [line width=2.25]    (230,167) .. controls (393,132) and (386,73) .. (280,80) ;
%Curve Lines [id:da04513195081071597] 
\draw [line width=2.25]    (297,168) .. controls (118.5,66) and (175,25) .. (264,48) ;
%Curve Lines [id:da9570654319394455] 
\draw [line width=2.25]    (264,48) .. controls (593,146) and (-53,93) .. (280,80) ;
%Shape: Circle [id:dp9505548321061437] 
\draw  [fill={rgb, 255:red, 0; green, 0; blue, 0 }  ,fill opacity=1 ][line width=2.25]  (225,259) .. controls (225,256.24) and (227.24,254) .. (230,254) .. controls (232.76,254) and (235,256.24) .. (235,259) .. controls (235,261.76) and (232.76,264) .. (230,264) .. controls (227.24,264) and (225,261.76) .. (225,259) -- cycle ;
%Shape: Circle [id:dp05671809396676397] 
\draw  [fill={rgb, 255:red, 0; green, 0; blue, 0 }  ,fill opacity=1 ][line width=2.25]  (295,258) .. controls (295,255.24) and (297.24,253) .. (300,253) .. controls (302.76,253) and (305,255.24) .. (305,258) .. controls (305,260.76) and (302.76,263) .. (300,263) .. controls (297.24,263) and (295,260.76) .. (295,258) -- cycle ;
%Shape: Circle [id:dp19661124865462776] 
\draw  [fill={rgb, 255:red, 155; green, 155; blue, 155 }  ,fill opacity=1 ][line width=2.25]  (178,235.5) .. controls (178,232.74) and (180.24,230.5) .. (183,230.5) .. controls (185.76,230.5) and (188,232.74) .. (188,235.5) .. controls (188,238.26) and (185.76,240.5) .. (183,240.5) .. controls (180.24,240.5) and (178,238.26) .. (178,235.5) -- cycle ;
%Shape: Circle [id:dp8209960772396726] 
\draw  [fill={rgb, 255:red, 155; green, 155; blue, 155 }  ,fill opacity=1 ][line width=2.25]  (179,144.5) .. controls (179,141.74) and (181.24,139.5) .. (184,139.5) .. controls (186.76,139.5) and (189,141.74) .. (189,144.5) .. controls (189,147.26) and (186.76,149.5) .. (184,149.5) .. controls (181.24,149.5) and (179,147.26) .. (179,144.5) -- cycle ;
%Shape: Circle [id:dp9171058038978386] 
\draw  [fill={rgb, 255:red, 155; green, 155; blue, 155 }  ,fill opacity=1 ][line width=2.25]  (180,91.5) .. controls (180,88.74) and (182.24,86.5) .. (185,86.5) .. controls (187.76,86.5) and (190,88.74) .. (190,91.5) .. controls (190,94.26) and (187.76,96.5) .. (185,96.5) .. controls (182.24,96.5) and (180,94.26) .. (180,91.5) -- cycle ;
%Shape: Circle [id:dp8390319162659856] 
\draw  [fill={rgb, 255:red, 155; green, 155; blue, 155 }  ,fill opacity=1 ][line width=2.25]  (177,61.5) .. controls (177,58.74) and (179.24,56.5) .. (182,56.5) .. controls (184.76,56.5) and (187,58.74) .. (187,61.5) .. controls (187,64.26) and (184.76,66.5) .. (182,66.5) .. controls (179.24,66.5) and (177,64.26) .. (177,61.5) -- cycle ;
%Straight Lines [id:da7729345296211283] 
\draw [line width=2.25]    (548,74) -- (838,74) ;
%Straight Lines [id:da4301114393248636] 
\draw [line width=2.25]    (553,142) -- (843,142) ;
%Straight Lines [id:da600351415102534] 
\draw [line width=2.25]    (628,142) -- (836,98) ;
%Straight Lines [id:da46454467500805907] 
\draw [line width=2.25]    (557,201) -- (766,143) ;
%Straight Lines [id:da7637859607808604] 
\draw [line width=2.25]    (548,237) -- (838,237) ;
%Shape: Circle [id:dp1716033478431358] 
\draw  [fill={rgb, 255:red, 0; green, 0; blue, 0 }  ,fill opacity=1 ][line width=2.25]  (631,237) .. controls (631,234.24) and (633.24,232) .. (636,232) .. controls (638.76,232) and (641,234.24) .. (641,237) .. controls (641,239.76) and (638.76,242) .. (636,242) .. controls (633.24,242) and (631,239.76) .. (631,237) -- cycle ;
%Shape: Circle [id:dp9622035222441215] 
\draw  [fill={rgb, 255:red, 0; green, 0; blue, 0 }  ,fill opacity=1 ][line width=2.25]  (760,238) .. controls (760,235.24) and (762.24,233) .. (765,233) .. controls (767.76,233) and (770,235.24) .. (770,238) .. controls (770,240.76) and (767.76,243) .. (765,243) .. controls (762.24,243) and (760,240.76) .. (760,238) -- cycle ;
%Straight Lines [id:da8679252317134191] 
\draw [line width=2.25]    (385,148) -- (512,147.03) ;
\draw [shift={(516,147)}, rotate = 179.56] [color={rgb, 255:red, 0; green, 0; blue, 0 }  ][line width=2.25]    (17.49,-5.26) .. controls (11.12,-2.23) and (5.29,-0.48) .. (0,0) .. controls (5.29,0.48) and (11.12,2.23) .. (17.49,5.26)   ;
%Shape: Ellipse [id:dp8582977983781029] 
\draw  [line width=2.25]  (346,423) .. controls (346,389.86) and (413.16,363) .. (496,363) .. controls (578.84,363) and (646,389.86) .. (646,423) .. controls (646,456.14) and (578.84,483) .. (496,483) .. controls (413.16,483) and (346,456.14) .. (346,423) -- cycle ;
%Straight Lines [id:da1314660856162746] 
\draw [line width=2.25]    (346,423) -- (646,423) ;

% Text Node
\draw (247,25) node [anchor=north west][inner sep=0.75pt]   [align=left] {$\displaystyle 1$};
% Text Node
\draw (174,150) node [anchor=north west][inner sep=0.75pt]   [align=left] {$\displaystyle 2$};
% Text Node
\draw (157,225) node [anchor=north west][inner sep=0.75pt]   [align=left] {$\displaystyle p_{0}$};
% Text Node
\draw (572,122.4) node [anchor=north west][inner sep=0.75pt]    {$2$};
% Text Node
\draw (593,191.4) node [anchor=north west][inner sep=0.75pt]    {$1$};
% Text Node
\draw (812,124.4) node [anchor=north west][inner sep=0.75pt]    {$2$};
% Text Node
\draw (671,46.4) node [anchor=north west][inner sep=0.75pt]    {$1$};
% Text Node
\draw (760,92.4) node [anchor=north west][inner sep=0.75pt]    {$1$};
% Text Node
\draw (652,147.4) node [anchor=north west][inner sep=0.75pt]    {$1$};
% Text Node
\draw (259,170) node [anchor=north west][inner sep=0.75pt]   [align=left] {$\displaystyle 1$};

\end{tikzpicture}

}
     \caption{On the top left, there is the quotient cover $\pi\colon\overline{\Gamma}\to E$ of an elliptic tropical twisted cover. Cutting the cover at $p_0$ and the preimages, we obtain the tropical twisted cover on the top right. At the bottom is the graph $\overline{\Gamma}$.}
     \label{fig:cuttingcovers8}
 \end{figure}

\section{Generating series in terms of Feynman integrals}
\label{sec-feynman}

In this section, we express elliptic twisted Hurwitz numbers as Feynman integrals. We assume that $g>2$ in the following. Consequently, in the quotient covers there cannot be loop edges.

In our context, the following definition of Feynman graph will be needed. These are exactly the graphs that appear as sources for quotients of twisted covers, up to  labeling.
\begin{definition}[Feynman graph]
    A Feynman  graph is a graph with  $2$- and $3$-valent vertices whose edges are labeled with $q_1,\ldots,q_r$ and whose vertices are labeled with $x_1,\ldots,x_s$.
\end{definition}

A Feynman integral depends on a Feynman graph and the choice of an order $\Omega$ of the vertices.

\begin{definition}[Edge Propagator]
Let $q_k$ be an edge of a Feynman graph, adjacent to two vertices $x_{k_1}$ and $x_{k_2}$, where we assume that $x_{k_1}<x_{k_2}$ in the order $\Omega$.

Given $w\in \mathbb{N}$, we define the coefficient $c_w$ of the  following propagator function to be
$$c_w:=\begin{cases}
    (w-1)\cdot w & \mbox{ if } x_{k_1} \mbox{ and } x_{k_2} \mbox{ are $2$-valent}\\
    \sqrt{w-1}\cdot w & \mbox{ if } x_{k_1} \mbox{ or } x_{k_2} \mbox{ is $2$-valent}\\
     w & \mbox{ if neither } x_{k_1} \mbox{ nor } x_{k_2} \mbox{ are $2$-valent}.\\
\end{cases} $$

We then define the propagator function of the edge $q_k$ to be 

$$P(q_k)=\sum_{w=1}^{\infty} c_w \Big(\frac{x_{k_1}}{x_{k_2}}\Big)^w+ \sum_{a_k=1}^\infty \Bigg(\sum_{w|a_k} c_w \Bigg(\Big(\frac{x_{k_1}}{x_{k_2}}\Big)^w+\Big(\frac{x_{k_2}}{x_{k_1}}\Big)^w\Bigg)\Bigg)q_k^{a_k}.$$

\end{definition}

\begin{definition}[Feynman integral]
Let $\Gamma$ be a Feynman graph and $\Omega$ be an order of its vertices. For each edge $q_k$, we denote its adjacent vertices by $x_{k_1}$ and $x_{k_2}$, where we assume that $x_{k_1}<x_{k_2}$ in the order $\Omega$.
We define the Feynman integral $I_{\Gamma,\Omega}(q_1,\ldots,q_r)$ to be
$$I_{\Gamma,\Omega}(q_1,\ldots,q_r)=\coef_{[x_1^0\ldots x_s^0]} \prod_{k=1}^r P(q_k).$$

Setting all $q_k$ equal to one variable $q$, we obtain the Feynman integral $I_{\Gamma,\Omega}(q)$.
    
\end{definition}

%We note that for non-twisted elliptic Hurwitz numbers Feynman integrals

\begin{remark}
    Here, we consider Feynman integrals merely as formal power series. In the relation involving (usual) Hurwitz  numbers of an elliptic curve, the propagator series can, using a coordinate change, be transformed into a linear combination of the Weierstra\ss-$\wp$-function and an Eisenstein series. After this coordinate change, the Feynman integral can be viewed as a complex analytic path integral.
\end{remark}

\begin{remark}
Fix a genus $g>2$. 
A $3$-valent graph of genus $2$ has $2$ vertices, increasing the genus by one yields $2$ more vertices. It follows that a graph of genus $2g$ has $2g-2$ vertices if it is $3$
-valent. Every $4$-valent vertex can be viewed as a merging of $2$ $3$-valent vertices, thus a graph of genus $2g$ with $c$ $4$-valent vertices and only $3$-valent vertices else has $2g-2-c$ vertices.
It follows that the source of a twisted tropical cover of $E$ has $2g-2-c$ vertices, where $c$ denotes the number of $4$-valent vertices. That is, $2g-2-2c$ vertices are $3$-valent and $c$ are $4$-valent. When passing to the quotient cover, its source graph has $g-1-c$ many $3$-valent vertices and $c$ many $4$-valent vertices. Its total number of vertices is thus $g-1$, independent of the number of $4$-valent vertices in the twisted cover.
\end{remark}

\begin{definition}[Labeled quotient covers]
A \emph{labeled quotient cover} is a quotient of a twisted tropical cover for which the vertices and edges of its source are labeled like a Feynman graph.

    Fix a base point $p_0$ of a tropical elliptic curve $E$. Fix a genus $g>2$, and $g-1$ branch points $p_1,\ldots, p_{g-1}$ in $E$.

Given a labeled quotient cover $\overline{\pi}$, we can define its \emph{multidegree} $a\in \mathbb{N}^r$ to be the tuple whose $k$-th entry equals the sum of the weights of the preimages of the base point $p_0$ in the edge $q_k$. 
    Fix an order $\Omega$ on $g-1$ elements $x_1,\ldots,x_{g-1}$. Let $\Gamma$ be a Feynman graph.

 We define $\tilde{h}_{\Gamma,\Omega,a}$ to be the weighted number of labeled quotient covers whose source is of combinatorial type $\Gamma$, whose multidegree equals $a$ and such that the order given by the preimages of the branch points $\overline{\pi}^{-1}(p_1)<\ldots < \overline{\pi}^{-1}(p_{g-1})$ equals $\Omega$. Each such cover is weighted by $\frac{2^{g'}-\delta_{0c}}{2^{c+1}}  \cdot 2^{g-1}\cdot \prod_V (\omega_V-1)\cdot \omega(e).$ Since there are no nontrivial automorphisms in the presence of labels, this equals the multiplicity given in Remark \ref{rem-countquotients}.
\end{definition}

\begin{proposition}\label{prop-hGammaOmega} Let $\Gamma$ be a Feynman graph of genus $g>2$ and $c$ be its number of $2$-valent vertices. Let $a$ be a multidegree and $\Omega$ an order. The count of labeled quotient covers equals a coefficient of a Feynman integral:
 $$ \tilde{h}_{\Gamma,\Omega,a}=\frac{2^{g'}-\delta_{0c}}{2^{c+1}  \cdot 2^{g-1}}%2^{g-c-1}
 \cdot \coef_{[q_1^{a_1}\ldots  q_r^{a_r}]} I_{\Gamma,\Omega}(q_1,\ldots,q_r).$$  
\end{proposition}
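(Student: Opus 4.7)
The plan is to expand \(\coef_{[x_1^0 \cdots x_s^0]} \prod_{k=1}^r P(q_k)\) term by term, identify each nonzero contribution to the \(q_1^{a_1} \cdots q_r^{a_r}\) coefficient with a unique labeled quotient cover of combinatorial type \(\Gamma\), multidegree \(a\) and branch-point order \(\Omega\), and match the resulting weight to the multiplicity appearing in the definition of \(\tilde{h}_{\Gamma, \Omega, a}\) (see also \cref{rem-countquotients}).

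First, I would unfold the product \(\prod_k P(q_k)\). Extracting the coefficient of \(q_k^{a_k}\) from the \(k\)-th propagator amounts to choosing an edge weight \(w_k\) with \(w_k \mid a_k\) (or \(w_k \geq 1\) arbitrary if \(a_k = 0\)) together with a direction \(\epsilon_k \in \{+1, -1\}\); the shape of \(P(q_k)\) forces \(\epsilon_k = +1\) whenever \(a_k = 0\), while for \(a_k > 0\) both directions contribute. The chosen monomial is \(c_{w_k}(x_{k_1}/x_{k_2})^{\epsilon_k w_k}\). Picking out the \(x_1^0 \cdots x_s^0\) coefficient of the full product then enforces the linear balancing relation
\[
\sum_{k \colon x_{k_1} = x_i} \epsilon_k w_k \; - \; \sum_{k \colon x_{k_2} = x_i} \epsilon_k w_k \; = \; 0
\]
at every vertex \(x_i\).

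Next, I would match valid tuples \(((w_k), (\epsilon_k))\) bijectively with labeled quotient covers of combinatorial type \(\Gamma\), multidegree \(a\) and branch-point order \(\Omega\). From a tuple one builds \(\overline{\pi} \colon \overline{\Gamma} \to E\) by taking \(\overline{\Gamma} = \Gamma\) with its labels, giving edge \(q_k\) weight \(w_k\) and wrapping number \(a_k/w_k\) around \(E\) in the direction recorded by \(\epsilon_k\), and sending vertex \(x_i\) to branch point \(p_i\) (consistent with the convention \(x_{k_1} < x_{k_2}\) in \(\Omega\) built into the propagator). The balancing relation above is precisely harmonicity of \(\overline{\pi}\), and \(w_k \mid a_k\) expresses integrality of the wrapping; the inverse construction is immediate.

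Finally, I would match the weights. Writing \(c_{w_k} = w_k \cdot (\sqrt{w_k - 1})^{m_k}\) with \(m_k \in \{0,1,2\}\) the number of 2-valent endpoints of \(q_k\), regrouping the square-root factors by 2-valent vertex produces \((\sqrt{\omega_V - 1})^2 = \omega_V - 1\) per 2-valent \(V\); the \(w_k\)-factors assemble to \(\prod_e \omega(e)\). Thus \(\prod_k c_{w_k}\) equals \(\prod_V (\omega_V - 1) \cdot \prod_e \omega(e)\), and multiplication by the cover-independent prefactor of \cref{rem-countquotients} (built from \(c\), \(g'\) and \(g\)) turns the Feynman integral coefficient into \(\tilde{h}_{\Gamma, \Omega, a}\). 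The main subtlety is in this last step: the square-root regrouping requires that at every 2-valent \(V\) the two incident edges share the same weight \(\omega_V\), which is built into the piecewise definition of \(c_w\) and matches the weight condition at 4-valent vertices of twisted covers; the exclusion of loop edges under the running hypothesis \(g > 2\) guarantees these two incident edges are distinct and each furnishes exactly one square root.
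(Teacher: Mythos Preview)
Your proposal is correct and follows essentially the same route as the paper: expand the product of propagators, interpret each surviving monomial as a labeled quotient cover (with the vanishing of the total $x_i$-degree giving harmonicity, and the divisor condition $w_k\mid a_k$ giving the curling number over $p_0$), and then match multiplicities by splitting each $2$-valent vertex factor $(\omega_V-1)$ into two square roots shifted onto the adjacent edges. One small imprecision: the equality of the two edge weights at a $2$-valent vertex is not ``built into'' the definition of $c_w$ but rather is forced by the balancing condition; otherwise your argument matches the paper's.
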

 The proof follows ideas of \cite{BBBM13, BGM22}.

\begin{proof}
Expanding the  product $\prod_{k=1}^r P(q_k)$, the summands are equal to products of the form
$$\prod_{k=1}^r c_{w_k} \Big(\frac{x_{i}}{x_{j}}\Big)^{w_k} \cdot q_k^{a_k}.$$ If $a_k$ is zero, $w_k$ can be any element in $\mathbb{N}$, and $i=k_1$, $j=k_2$. If $a_k>0$, $w_k|a_k$ and $i$ can be either $k_1$ or $k_2$, and $j$ the remaining.
To each such summand, we associate a labeled quotient cover in the following way: We start by fixing as preimages of the branch points the vertices $x_i$ as imposed by the order $\Omega$. 

For a factor $c_{w_k} \Big(\frac{x_{i}}{x_{j}}\Big)^{w_k} \cdot q_k^{a_k}$ with $a_k=0$, we draw an edge labeled $q_k$ which goes from the vertex $x_{k_1}$ to $x_{k_2}$ without crossing over the base point. This is possible since the $x_i$ respect the order $\Omega$. We fix the weight of our edge to be $w_k$.

For a factor $c_{w_k} \Big(\frac{x_{i}}{x_{j}}\Big)^{w_k} \cdot q_k^{a_k}$ with $a_k>0$, we draw an edge labeled $q_k$: If $i=k_1$ we let it start at $x_{k_1}$ and connect with $x_{k_2}$ (where we think of the edges of our cover as oriented in the way imposed by the order $\Omega$). If $i=k_2$, we let it start at $x_{k_2}$ at connect it with $x_{k_1}$. We ''curl'' this edge in such a way that is passes $\frac{a_k}{w_k}$ times over the base point $p_0$. The weight of the edge in each case is defined to be $w_k$. 

We claim that in this way, we produce a labeled quotient cover contributing to $\tilde{h}_{\Gamma,\Omega,a}$ with $a=(a_1,\dots,a_r)$. Since we used the edge $q_k$ to connect its neighboring vertices in $\Gamma$, the source of the covers is of combinatorial type $\Gamma$ by construction. The multidegree is $a$, since for each $k$ with $a_k=0$, we let our edge not pass over the base point, whereas for each $k$ with $a_k>0$ the edge of weight $w_k$ passes $\frac{a_k}{w_k}$ times over the base point, leading to the entry $a_k$ in the multidegree. The order $\Omega$ is also respected by construction.

What remains to be seen is that we obtained indeed a cover, i.e.\ the balancing condition has to be satisfied. This holds true since a product as above only contributes to the Feynman integral $I_{\Gamma,\Omega}(q_1,\ldots,q_r)$ if its total degree in the $x_i$ vanishes. The total power of $x_i$ equals, by construction, the signed sum of the weights of its adjacent edges. The fact that the degree in $x_i$ is zero is thus equivalent to the balancing condition at vertex $x_i$.

In this way, we obtain a bijection between summands contributing to the Feynman integral and labeled quotient covers. What about multiplicities? In the Feynman integral, a summand contributes $\prod_k c_{w_k}$. Thus, the summand contributes  $\frac{2^{g'}-\delta_{0c}}{2^{c+1}}  \cdot 2^{g-1}\cdot \prod_k c_{w_k}$ to the right hand side. We have to show that this equals the multiplicity with which the labeled quotient cover is counted in Remark \ref{rem-countquotients}, i.e.\ that $\prod_k c_{w_k}= \prod_V(\omega_V-1)\cdot \prod_e\omega_e$.
Recall that the weight of the edge $q_k$ equals $w_k$. 
For a $2$-valent vertex $V$, we have a factor of $\omega_V-1$, where $\omega_V$ denotes the weight of the adjacent edges. We can thus part this contribution into two factors of $\sqrt{\omega_V-1}$ and shift those towards the adjacent edges.

By definition, if $q_k$ connects two $2$-valent vertices, $c_{w_k}$ equals $(w_k-1)\cdot w_k$ --- it obtains two factors of $\sqrt{w_k-1}$ from both its adjacent vertices, and it also contributes its  own weight, as every edge does. If $q_k$ connects a $2$-valent with a $3$-valent vertex, it obtains only one factor of $\sqrt{w_k-1}$. If both vertices of $q_k$ are $3$-valent, it obtains no such factor. It follows that if we reinterpret the  product $ \prod_V(\omega_V-1)\cdot \prod_e\omega_e$ as a product over edges by shifting the vertex contributions as square roots into both adjacent edges, we get exactly the contribution $c_{w_k}$ which is used to define the propagator function for the Feynman integral. 

Thus the multiplicity with which a labeled quotient cover contributes to $\tilde{h}_{\Gamma,\Omega,a}$
 exactly equals the contribution of its corresponding summand in the Feynman integral (up to the factor of $\frac{2^{g'}-\delta_{0c}}{2^{c+1}}  \cdot 2^{g-1}$%$2^{g-c-1}$
 ), and the equality holds.

\end{proof}

The following is the main theorem of this section and expresses the generation function of elliptic twisted Hurwitz numbers as a finite sum over Feynman integrals.

\begin{theorem}
\label{thm-feynman}
Fix a genus $g>2$. The generating series of twisted Hurwitz numbers can be expressed in terms of Feynman integrals:
$$\sum_d \tilde{h}_{d,g} q^d=2^{g-1}\cdot  \sum_{\Gamma} \frac{2^{\frac{1}{2}\cdot (g-c_\Gamma+1)}-\delta_{0c_\Gamma}}{2^{c_\Gamma+1}}\cdot \sharp \Aut(\Gamma) \sum_{\Omega} I_{\Gamma,\Omega}(q).$$
Here, the first sum on the right hand side goes over all Feynman graphs of genus $g$ and $c_{\Gamma}$ denotes their number of $2$-valent vertices, while the second sum goes over all orders $\Omega$.    

\end{theorem}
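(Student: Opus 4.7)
The plan is to trace the twisted elliptic Hurwitz number through the correspondence theorem and the quotient reformulation, and then recognise the resulting generating function as a sum of Feynman integrals.

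First, I would apply \cref{thm-corres} to replace $\tilde{h}_{d,g}$ by $\tilde{h}^{\trop}_{d,g}$, and then invoke \cref{rem-countquotients} to rewrite the tropical count as a weighted sum over unlabeled quotient covers $\overline{\pi}\colon\overline{\Gamma}\to E$ of degree $d$, with each cover weighted by
$$\frac{2^{g'}-\delta_{0c}}{2^{c+1}}\cdot 2^{g-1}\cdot \frac{1}{|\Aut(\overline{\pi})|}\cdot \prod_V(\omega_V-1)\prod_e\omega(e),$$
where $c$ is the number of $2$-valent vertices of $\overline{\Gamma}$ and $g' = (g-c+1)/2$. Since the hypothesis $g>2$ excludes loop edges, $c$ and $g'$ are determined by the combinatorial type of $\overline{\Gamma}$, so the prefactor can be pulled out once covers are grouped by that type.

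Next, I would stratify the sum by the labeled Feynman graph $\Gamma$ underlying $\overline{\Gamma}$ and by the order $\Omega$ on its $g-1$ vertices induced by the images of the branch points. By orbit-stabiliser, summing $1/|\Aut(\overline{\pi})|$ over unlabeled quotient covers whose source has combinatorial type $\Gamma$ equals $1/|\Aut(\Gamma)|$ times summing $1$ over labeled quotient covers of type $\Gamma$; multiplying by $q^d$ and summing over $d$ then repackages the total as a generating function in $q$ running over labeled quotient covers of all multidegrees $a$, with the degree $d$ tracked by $\sum_k a_k$ (the total weight of preimages of the base point $p_0$).

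Then I would invoke \cref{prop-hGammaOmega}, which identifies the labeled count $\tilde{h}_{\Gamma,\Omega,a}$ with $\coef_{[q_1^{a_1}\cdots q_r^{a_r}]} I_{\Gamma,\Omega}(q_1,\ldots,q_r)$ up to the common prefactor $\frac{2^{g'}-\delta_{0c}}{2^{c+1}}\cdot 2^{g-1}$. Setting $q_k=q$ for all $k$ collapses $\sum_a \coef_{[q_1^{a_1}\cdots q_r^{a_r}]} I_{\Gamma,\Omega}(q_1,\ldots,q_r)\cdot q^{\sum_k a_k}$ into $I_{\Gamma,\Omega}(q)$, and substituting $g'=\tfrac{1}{2}(g-c_\Gamma+1)$ produces the exponent displayed in the statement. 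The delicate point is the automorphism accounting in the transition from the unlabeled sum in \cref{rem-countquotients} to the labeled sum governed by \cref{prop-hGammaOmega}; once the convention on $\Aut(\Gamma)$ is fixed compatibly with \cref{prop-hGammaOmega} (so that the $\sharp\Aut(\Gamma)$ factor in the statement matches the orbit-stabiliser ratio $|\Aut(\Gamma)|/|\Aut(\overline{\pi})|$), the remaining manipulations are routine combinatorial bookkeeping.
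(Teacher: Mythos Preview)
Your proposal is correct and follows essentially the same route as the paper's proof: pass from $\tilde{h}_{d,g}$ to quotient covers via \cref{thm-corres} and \cref{rem-countquotients}, use orbit--stabiliser for the forgetful map from labeled to unlabeled quotient covers to trade $1/|\Aut(\overline{\pi})|$ for a global $\Aut(\Gamma)$-factor, apply \cref{prop-hGammaOmega}, and specialise $q_k=q$. You have also correctly flagged the one genuinely delicate point, namely the automorphism bookkeeping in matching the orbit--stabiliser ratio $|\Aut(\Gamma)|/|\Aut(\overline{\pi})|$ to the $\sharp\Aut(\Gamma)$ appearing in the displayed formula; the paper handles this exactly as you describe.
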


\begin{proof}
     For a fixed graph $\Gamma$, let $\tilde{h}_{d,\Gamma}$ be the number of (unlabeled) quotient covers of degree $d$ for which the combinatorial type of the source curve is $\Gamma$. As in Remark \ref{rem-countquotients}, each cover $\overline{\pi}$ is counted with multiplicity %$2^{g-c-1}
     $\frac{2^{g'}-\delta_{0c}}{2^{c+1}}  \cdot 2^{g-1}\cdot \frac{1}{|\Aut(\overline{\pi})|}\prod_V(\omega_V-1)\prod_{e}\omega(e)$.
 There exists a forgetful map $\ft$ from the set of labeled quotient covers to the set of unlabeled covers by just forgetting the labels. For an (unlabeled) quotient cover $\overline{\pi}$ whose source is of combinatorial type $\Gamma$, the automorphism group of $\Gamma$, $\Aut(\Gamma)$, acts transitively on the fiber $\ft^{-1}(\overline{\pi})$ by relabeling vertices and edges. So, to determine the cardinality of the set $\ft^{-1}(\overline{\pi})$, we think of it as the orbit under this action and obtain $\sharp \ft^{-1}(\overline{\pi})=\frac{\sharp\Aut(\Gamma)}{\sharp \Aut(\overline{\pi})}$, since the stabilizer of the action equals the set of automorphisms of $\overline{\pi}$.
Each labeled quotient  cover in the set $\ft^{-1}(\pi)$ is counted with the same multiplicity ${\frac{2^{g'}-\delta_{0c}}{2^{c+1}}  \cdot 2^{g-1}}%$2^{g-c-1}
\cdot \prod_V(\omega_V-1)\prod_{e}\omega(e)$.

The sum $\sum_{a|\sum a_i=d} \sum_\Omega \tilde{h}_{\Gamma,\Omega,a}$
can be reorganized as a sum over unlabeled quotient covers, where for each unlabeled cover, we have to sum the multiplicities for each labeled quotient cover in the fiber under $\ft$. As the multiplicity is the same for each element in the fiber, and there are $\frac{\sharp\Aut(\Gamma)}{\sharp \Aut(\overline{\pi})}$ elements in the fiber, we can see that this sum equals 
$\sharp\Aut(\Gamma)\cdot \tilde{h}_{d,\Gamma}$.

We conclude 

\begin{align}  &\sum_d \tilde{h}_{d,g} q^{d} = \sum_d \sum_\Gamma \tilde{h}_{d,\Gamma} q^{d}= \sum_d \sum_\Gamma \frac{1}{\sharp\Aut(\Gamma)} \sum_{a|\sum a_i=d} \sum_\Omega\tilde{h}_{\Gamma,\Omega,a} q^{d} \end{align}

Now we can replace $\tilde{h}_{\Gamma,\Omega,a}$ by %$2^{g-c_\Gamma-1} $ 
{$\frac{2^{g'}-\delta_{0c}}{2^{c+1}}  \cdot 2^{g-1}$}
times the coefficient of $q^{a}$ in $I_{\Gamma,\Omega}(q_1,\ldots,q_{r})$ by Proposition \ref{prop-hGammaOmega}.
If we insert $q_k=q$ for all $k$  we can conclude that the coefficient of $q^{d}$ in $I_{\Gamma,\Omega}(q)$ equals %$2^{g-c_\Gamma-1} $
{$\frac{2^{g'}-\delta_{0c}}{2^{c+1}}  \cdot 2^{g-1}$} times $\sum_{a|\sum a_i=d}\tilde{h}_{\Gamma,\Omega,a}$. Thus the generating series above equals

$$\sum_d \tilde{h}_{d,g} q^d={2^{g-1}\cdot  \sum_{\Gamma} \frac{2^{\frac{1}{2}\cdot (g-c_\Gamma+1)}-\delta_{0c_\Gamma}}{2^{c_\Gamma+1}}\cdot \sharp \Aut(\Gamma)} \sum_{\Omega} I_{\Gamma,\Omega}(q).$$

\end{proof}

\section{The Fock space approach}
\label{sec-fockspace}

We shortly review the bosonic Fock space approach for generating series of Hurwitz numbers.

The bosonic Heisenberg algebra $\mathcal{H}$ is the Lie algebra with basis $\alpha_n$ for $n\in \mathbb{Z}$ such that for $n\neq 0$ the following commutator relations are satisfied:
\begin{align}
[\alpha_n, \alpha_m]=\left(n\cdot \delta_{n,-m}\right)\alpha_0,
\end{align}
where $\delta_{n,-m}$ is the Kronecker symbol and $[\alpha_n, \alpha_m]:=\alpha_n\alpha_m-\alpha_m\alpha_n$. 
The bosonic Fock space $F$ is a representation of $\mathcal{H}$.  It is generated by a single ``vacuum vector'' $v_\emptyset$.  The positive generators annihilate $v_\emptyset$: $\alpha_n\cdot v_\emptyset=0$ for $n>0$, $\alpha_0$ acts as the identity and the negative operators act freely.  That is, $F$ has a basis $b_\mu$ indexed by partitions, where
\begin{align}
b_\mu=\alpha_{-\mu_1}\dots \alpha_{-\mu_m}\cdot v_{\emptyset}.
\end{align}

We define an inner product on $F$ by declaring $\langle v_\emptyset | v_\emptyset \rangle=1$ and $\alpha_n$ to be the adjoint of $\alpha_{-n}$. 

We write 
$\langle v|A|w\rangle$ for $\langle v|Aw\rangle$, where $v,w\in F$ and the operator $A$ is a product of elements in $\mathcal{H}$, and $\langle A \rangle$ for $\langle v_\emptyset |A|v_\emptyset \rangle$. The first is called a \emph{matrix element}, the second a \emph{vacuum expectation}.
{We introduce a new formal variable $z$ to keep track of $4$-valent vertices, as their number influences the prefactor with which we have to count quotient covers by Remark \ref{rem-countquotients}, i.e. ${\frac{2^{g'}-\delta_{0c}}{2^{c+1}}  \cdot 2^{g-1}}$.}

\begin{definition} The vertex operator is defined by:
\begin{equation}
\label{caj}M  
=2\cdot \Big(\sum_{k>0} (k-1)\cdot \alpha_{-k}\alpha_k\cdot z
+
\frac{1}{2} \sum_{k>0} \sum_{\substack{0< i, j \\i+j=k}} \alpha_{-j}\alpha_{-i}\alpha_k+\alpha_{-k}\alpha_{i}\alpha_{j}\Big)
\end{equation}
\end{definition}

We note that unlike in the Feynman diagram approach here we don't have to shift vertex contributions into neighbouring edges. Moreover, the global factor of $2$ is to take the number of branch points into account. We can also view it as vertex contribution.

We obtain the following result.

\begin{proposition}\label{prop-doublehurwitzfock} The twisted double Hurwitz  number $\tilde{h}^\bullet_g(\mu,\nu)$ (see Definition 4, \cite{HM22}) equals a matrix element on the bosonic Fock space:
$${\tilde{h}^\bullet_g(\mu,\nu) = \frac{1}{\prod_i\mu_i\cdot \prod_j \nu_j}\sum_{c=0}^{g-1} \coef_{[z^c]}(\langle b_\mu | M^{g-1} | b_\nu\rangle)\cdot  \frac{2^{\frac{1}{2}(g-c+1)}}{2^{c+1}}  \cdot 2^{g-1}}.$$
\end{proposition}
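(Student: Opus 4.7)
The plan is to follow the now-standard ``bosonification is tropicalisation'' template, adapted to the twisted setting. First I would use the correspondence theorem for twisted double Hurwitz numbers from \cite{HM22} to rewrite $\tilde{h}^\bullet_g(\mu,\nu)$ as a weighted count of (possibly disconnected) twisted tropical covers $\pi\colon \Gamma \to \mathbb{R}$ with left ends of weights $\mu$ and right ends of weights $\nu$, using $g-1$ branch points in fixed generic positions. Then, by passing to the quotient cover $\overline{\pi}\colon \overline{\Gamma}\to \mathbb{R}$ and applying the disconnected version of \cref{prop-automquotient} (the formula given in the remark just after it), I can rewrite this count as a sum over quotient covers, each weighted by
\[
\frac{2^{g'}-\delta_{0c}}{2^{c+1}}\cdot 2^{g-1}\cdot \frac{1}{|\Aut(\overline{\pi})|}\cdot \prod_V(\omega_V-1)\cdot \prod_e \omega(e),
\]
but since we are in the disconnected case the $\delta_{0c}$ term disappears, leaving the clean prefactor appearing in the proposition.

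Next I would expand $\langle b_\mu | M^{g-1} | b_\nu\rangle$ using the standard commutator calculus on $F$: each factor of $M$ is applied in turn to the state built up from $b_\nu$, and every non-vanishing contribution corresponds to a complete pairing of creation operators $\alpha_{-k}$ with annihilation operators $\alpha_k$ via the rule $[\alpha_k,\alpha_{-k}]=k$. Such a pairing is exactly the combinatorial data of a quotient tropical cover of $\mathbb{R}$: the $i$th application of $M$ contributes one vertex $v_i$ (placed at the $i$th branch point, which fixes the order $\Omega$), the three summands of $M$ account for the three local vertex shapes — a $2$-valent vertex (from $(k-1)\alpha_{-k}\alpha_k\cdot z$, which is exactly a quotient of a $4$-valent twisted vertex and is accompanied by a weight $z$), a splitting $3$-valent vertex (from $\alpha_{-j}\alpha_{-i}\alpha_k$) and a merging $3$-valent vertex (from $\alpha_{-k}\alpha_i\alpha_j$). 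Each edge of weight $\omega(e)$ between two operators contributes a factor $\omega(e)$ coming from the commutator, the $2$-valent vertex contributes $(\omega_V-1)$, and the initial / final basis vectors $b_\nu$ and $b_\mu$ force the boundary weights. The global factor $2$ in the definition of $M$ accounts for the fact that each branch point is a transposition whose image in the quotient contributes a symmetry factor of $2$, exactly matching the $2^{g-1}$ factor appearing in the tropical multiplicity of \cref{def:twisttrophn}.

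Comparing term-by-term, the coefficient of $z^c$ picks out those pairings with exactly $c$ two-valent vertices in the quotient, i.e.\ those quotient covers whose preimage has $c$ $4$-valent vertices; multiplying by the prefactor $\frac{2^{(g-c+1)/2}}{2^{c+1}}$ and summing over $c$ reproduces exactly the multiplicity predicted by \cref{prop-automquotient}. Finally, the normalisation $\frac{1}{\prod_i\mu_i\prod_j\nu_j}$ compensates for the fact that the basis vectors $b_\mu$ and $b_\nu$ are not orthonormal: the natural pairing yields $\langle b_\mu \mid b_\mu\rangle = \prod_i\mu_i \cdot |\Aut(\mu)|$, and the ends of the cover being labelled ``for free'' on both sides accounts for the remaining factor $\prod_i\mu_i\prod_j\nu_j$ in the weight of each edge-pairing.

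The main obstacle I expect is the careful bookkeeping of the prefactor in a way that simultaneously tracks (i) the twisting (the quotient-to-cover factor from \cref{prop-automquotient} in the disconnected version), (ii) the new variable $z$ marking $4$-valent preimages, and (iii) the global factor of $2$ inside $M$ versus the $2^{g-1}$ in the tropical multiplicity. Everything else is essentially combinatorial bookkeeping following \cite{CJMR16, hahn2022tropical} adapted to the twisted setting of \cite{HM22}.
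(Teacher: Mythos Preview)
Your proposal is correct and follows essentially the same route as the paper's proof: the paper invokes Wick's Theorem (which is exactly the ``complete pairing of creation and annihilation operators via commutators'' you describe) together with the Correspondence Theorem of \cite{HM22} and the quotient-cover multiplicity of Remark~\ref{rem-countquotients}, identifying the graphs arising from the matrix element with quotient covers of $\mathbb{R}$ and using $z$ to track the number of $4$-valent vertices. Your discussion of the normalisation $\frac{1}{\prod_i\mu_i\prod_j\nu_j}$ and of why the $\delta_{0c}$ term drops in the disconnected setting is slightly more explicit than the paper's, but the underlying argument is the same.
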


\begin{proof}
    
This statement follows by combining Wick's Theorem with the Correspondence Theorem for twisted double Hurwitz numbers in \cite{HM22} resp.\ with the cut-and-join equation for twisted double Hurwitz numbers \cite[Theorem 6.5]{chapuy2020non}: Wick's Theorem (Theorem 5.4.3 \cite{CJMR16}, Proposition 5.2 \cite{BG14b}, \cite{Wic50}) expresses a matrix element as a weighted count of graphs that are obtained by completing local pictures. It turns out that the graphs in question are exactly the quotient covers we enumerate to obtain $\tilde{h}^\bullet_g(\mu,\nu)$, {multiplied with the factor depending on the number $c$ of $4$-valent vertices, as described in Remark \ref{rem-countquotients}.}

Notice that we have to use the disconnected theory here ($\bullet$), since the matrix element encodes \emph{all} graphs completing the local pictures and cannot distinguish connected and disconnected graphs.

The local pictures are built as follows: we draw one vertex for each vertex operator. For an $\alpha_n$ with $n>0$, we draw an edge germ of weight $n$ pointing to the right. If $n<0$, we draw an edge germ of weight $n$ pointing to the left.
For the two Fock space elements $b_\mu$ and $b_\nu$, we draw germs of ends: of weights $\mu_i$ on the left pointing to the right, of weights $\nu_i$ on the right pointing to the left. Wick's Theorem states that the matrix element $\langle b_\mu | M^n | b_\nu\rangle$ equals a sum of graphs completing all possible local pictures, where each graph contributes the product of the weights of all its edges (including the ends) and the vertex contributions arising from the vertex operator.
A completion of the local pictures can be interpreted as a quotient cover of $\mathbb{R}$ (with suitable metrization).

The vertex operator sums over all the possibilities of the local pictures for the graphs, i.e.\ it sums over all possibilities how a vertex of a quotient cover can look like. {The variable $z$ takes care of how many $4$-valent vertices there are.}
\end{proof}

Combining Proposition \ref{prop-doublehurwitzfock}
with the relation we obtain via cutting in the proof of Theorem \ref{thm-corres}, we can express twisted Hurwitz numbers of the elliptic curve in terms of matrix elements:

\begin{proposition}\label{prop-etop1}
A twisted Hurwitz number of the elliptic curve equals a weighted sum of twisted double Hurwitz numbers:
$$\tilde{h}^\bullet_{d,g}= \sum_{\mu\; \vdash d} \frac{\prod_i \mu_i}{|\Aut(\mu)|}
\tilde{h}^\bullet_g(\mu,\mu).
$$
Here, the sum goes over all partitions $\mu$ of $d$.
\end{proposition}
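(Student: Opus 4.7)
The plan is to implement the hint preceding the statement: combine the cutting argument from the proof of Theorem \ref{thm-corres} with the Fock space formula in Proposition \ref{prop-doublehurwitzfock}. The weight $\prod_i\mu_i/|\Aut(\mu)|$ will emerge as the ratio between the prefactor $1/(\prod_i\mu_i)^2$ built into Proposition \ref{prop-doublehurwitzfock} and the inverse norm of the Fock basis vector $b_\mu$.

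First I would cut a twisted cover of $E$ at a chosen base point $p_0$, exactly as in the proof of Theorem \ref{thm-corres}. Starting from $\tilde h^\bullet_{d,g} = \tfrac{1}{(2d)!!}\#\{(\sigma,\eta_1,\dots,\eta_{g-1},\alpha)\}$, I would fix $(\sigma,\eta_1,\dots,\eta_{g-1})$ and count the admissible $\alpha\in B_d$. Setting $\sigma':=\eta_1\cdots\eta_{g-1}\,\sigma\,(\tau\eta_{g-1}\tau)\cdots(\tau\eta_1\tau)$, the constraint is $\alpha\sigma\alpha^{-1}=\sigma'$; the number of such $\alpha$ is zero unless $\sigma,\sigma'$ are $B_d$-conjugate, in which case it equals the centralizer order $|C_{B_d}(\sigma)|$. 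Since the $B_d$-conjugacy class of $\sigma\in B^\sim_d$ is determined by the quotient cycle type $\mu\vdash d$, an elementary calculation (rotations inside each twisted cycle pair, swaps of the two cycles within each pair compatible with $\tau$, and permutations of twisted pairs of equal length) yields
\[
|C_{B_d}(\sigma)| = 2^{\ell(\mu)} \cdot |\Aut(\mu)| \cdot \prod_i \mu_i.
\]

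Next I would group tuples by the quotient type $\mu$ and match the result against Proposition \ref{prop-doublehurwitzfock} via the standard Fock-space trace identity
\[
\mathrm{tr}\bigl(A\big|_{\deg d}\bigr)\;=\;\sum_{\mu\vdash d}\frac{\langle b_\mu\mid A\mid b_\mu\rangle}{\langle b_\mu\mid b_\mu\rangle},
\]
which expresses the trace of an operator $A$ on the degree-$d$ subspace as a weighted sum of diagonal matrix elements. Summing over the closure data $\alpha$ is the Wick-theoretic operation that converts the matrix elements of Proposition \ref{prop-doublehurwitzfock} into their trace contributions. Using the standard Heisenberg computation $\langle b_\mu\mid b_\mu\rangle = \prod_i\mu_i\cdot|\Aut(\mu)|$ and the identity $\tilde h^\bullet_g(\mu,\mu) = (\prod_i\mu_i)^{-2}\cdot\mathcal{M}_\mu$, where $\mathcal{M}_\mu$ denotes the $z$-extracted matrix element with its twisted prefactor from Proposition \ref{prop-doublehurwitzfock}, one gets
\[
\tilde h^\bullet_{d,g} \;=\; \sum_{\mu\vdash d}\frac{\mathcal{M}_\mu}{\prod_i\mu_i\cdot|\Aut(\mu)|}\;=\;\sum_{\mu\vdash d}\frac{\bigl(\prod_i\mu_i\bigr)^2\tilde h^\bullet_g(\mu,\mu)}{\prod_i\mu_i\cdot|\Aut(\mu)|}\;=\;\sum_{\mu\vdash d}\frac{\prod_i\mu_i}{|\Aut(\mu)|}\,\tilde h^\bullet_g(\mu,\mu).
\]

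The main obstacle will be justifying the passage from the tropical/combinatorial cutting of Theorem \ref{thm-corres} to the Fock-space trace: one must verify that the gluing count $n_{\tilde\pi,\pi}=\prod_{e'}\omega(e')^{c_{e'}}\cdot|\Aut(\tilde\pi)|/|\Aut(\pi)|$ of that proof, combined with the centralizer $2^{\ell(\mu)}\cdot|\Aut(\mu)|\cdot\prod_i\mu_i$, reproduces exactly the prefactor that the trace identity introduces on the Wick side. In particular the factor $2^{\ell(\mu)}$ from twisted-pair swaps must cancel against normalisation factors already built into the Fock-space picture underlying Proposition \ref{prop-doublehurwitzfock}. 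Once this combinatorial bookkeeping is made explicit, the rest of the argument is the formal manipulation displayed above.
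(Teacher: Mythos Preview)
Your proposal takes an unnecessary detour and leaves the key step unjustified. The paper's argument does \emph{not} use Proposition~\ref{prop-doublehurwitzfock} at all: the sentence before the statement is announcing that \emph{after} proving Proposition~\ref{prop-etop1} one combines it with Proposition~\ref{prop-doublehurwitzfock} to obtain Corollary~\ref{cor-etop1}. The proof of Proposition~\ref{prop-etop1} itself is a direct tropical comparison: for a twisted tropical cover of $E$, record the partition $\mu$ of edge weights over the base point $p_0$, cut there, and compare the multiplicity of the resulting cover of $\mathbb{R}$ (which contributes to $\tilde h^\bullet_g(\mu,\mu)$) with the original multiplicity. The factor $\prod_i\mu_i$ appears because the cut edges become ends and no longer carry their weight in the tropical multiplicity; the factor $|\Aut(\mu)|$ accounts for the marking of the preimages of $p_0$. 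That is the whole argument.

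Your route, by contrast, tries to pass through the Fock space trace identity. But the assertion ``summing over the closure data $\alpha$ is the Wick-theoretic operation that converts the matrix elements into their trace contributions'' is precisely what has to be proved; it is equivalent (via Proposition~\ref{prop-doublehurwitzfock}) to the statement you are trying to establish. You acknowledge this yourself as the ``main obstacle'' and do not resolve it. The dangling factor $2^{\ell(\mu)}$ from your centraliser computation is a symptom: without pinning down the exact normalisation in the definition of $\tilde h^\bullet_g(\mu,\nu)$ from \cite{HM22} and checking it against the Fock picture, you cannot see whether or how it cancels. In fact your Step~1 alone (the centraliser count for $\alpha\in B_d$) is the germ of a valid \emph{algebraic} proof that bypasses Fock space entirely---it is the monodromy-side analogue of the paper's tropical cut---but you abandon it in favour of the trace argument instead of finishing it. To complete that route you would need to match $|C_{B_d}(\sigma)|$ directly against the factorisation definition of $\tilde h^\bullet_g(\mu,\mu)$, not against Proposition~\ref{prop-doublehurwitzfock}.
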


Proposition \ref{prop-etop1} is a corollary of the two Correspondence Theorems: given a tropical cover of $E$, let $\mu$ be the partition encoding the weights of the edges mapping to the base point $p_0$. We mark the preimages of $p_0$, for which we have $|\Aut(\mu)|$ choices. For each choice, we cut off $E$ at $p_0$ and the covering curve at the preimages of $p_0$, obtaining a cover of $\mathbb{R}$ with ramification profiles $\mu$ and $\mu$ above $\pm \infty$. The cut off tropical cover contributes to $\tilde{h}^\bullet_g(\mu,\nu)$, but its multiplicity differs from the multiplicity of the cover of $E$ by a factor of $\prod \mu_i$, since the edges we cut off are no longer bounded.

Finally, we obtain an expression of elliptic twisted Hurwitz numbers as a matrix element on the bosonic Fock space as a corollary of \cref{prop-doublehurwitzfock,prop-etop1}.

\begin{corollary}\label{cor-etop1}
A twisted Hurwitz number of the elliptic curve $E$ equals a sum of matrix elements on the bosonic Fock space:
$$\tilde{h}_{g,d}^\bullet = \sum_{\mu\;\vdash d}  \frac{1}{|\Aut(\mu)|\prod_i\mu_i} \sum_{c=0}^{g-1}\coef_{[z^c]}(\langle b_\mu | M^{g-1} | b_\mu\rangle)\cdot  \frac{2^{\frac{1}{2}(g-c+1)}}{2^{c+1}}  \cdot 2^{g-1}.$$
\end{corollary}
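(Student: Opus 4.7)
The strategy is a direct substitution combining the two propositions just established. Starting from Proposition \ref{prop-etop1}, which reduces a twisted Hurwitz number of the elliptic curve to a weighted sum of twisted double Hurwitz numbers of $\mathbb{R}$, I would substitute the Fock space expression for $\tilde{h}^\bullet_g(\mu,\mu)$ provided by Proposition \ref{prop-doublehurwitzfock}. Nothing new has to be proved at the Fock space level, and nothing new has to be proved about the cutting construction.

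Concretely, I would begin by writing
\[
\tilde{h}_{g,d}^\bullet = \sum_{\mu\;\vdash d} \frac{\prod_i \mu_i}{|\Aut(\mu)|}\, \tilde{h}_g^\bullet(\mu,\mu),
\]
and then insert the formula from Proposition \ref{prop-doublehurwitzfock} with $\nu = \mu$, which specialises the denominator $\prod_i\mu_i \cdot \prod_j\nu_j$ to $(\prod_i\mu_i)^2$. Combining the prefactors gives
\[
\frac{\prod_i \mu_i}{|\Aut(\mu)|} \cdot \frac{1}{(\prod_i \mu_i)^2} = \frac{1}{|\Aut(\mu)|\,\prod_i \mu_i},
\]
while the internal sum $\sum_{c=0}^{g-1}\coef_{[z^c]}(\langle b_\mu|M^{g-1}|b_\mu\rangle)\cdot \frac{2^{(g-c+1)/2}}{2^{c+1}} \cdot 2^{g-1}$ is preserved verbatim, yielding exactly the claimed identity.

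There is essentially no obstacle here, since the two propositions do all the work: Proposition \ref{prop-doublehurwitzfock} already accounts for the Fock space counting of quotient covers of $\mathbb{R}$ with the correct weight $\frac{2^{(g-c+1)/2}}{2^{c+1}}\cdot 2^{g-1}$ in each $4$-valent stratum (tracked by $z^c$), and Proposition \ref{prop-etop1} already accounts for the gluing at the basepoint $p_0$, including the combinatorial factor $\prod_i\mu_i / |\Aut(\mu)|$ coming from the number of ways to mark the preimages of $p_0$ and from the fact that the cut ends become unbounded. The only care needed is bookkeeping: one must check that the variable $z$ faithfully records the count $c$ of $4$-valent vertices on both sides of the cutting identification, which follows because cutting $E$ open at $p_0$ does not create or destroy any $4$-valent vertex (these lie over $p_1,\dots,p_{g-1}$, not over $p_0$). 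Once this is observed, the corollary follows by comparing coefficients.
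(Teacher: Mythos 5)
Your proof is correct and follows exactly the route the paper takes: the corollary is stated as an immediate consequence of combining Propositions \ref{prop-doublehurwitzfock} and \ref{prop-etop1}, and the only computation required is the cancellation of $\prod_i\mu_i$ in the prefactors, which you carry out correctly. Your additional remark about $z$ faithfully tracking $4$-valent vertices across the cutting is a reasonable sanity check, though not strictly necessary since Proposition \ref{prop-etop1} is a statement about the Hurwitz numbers themselves, independent of the Fock space bookkeeping.
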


\printbibliography

\end{document}